\def\abs#1{\left \vert #1 \right \vert}
\newcommand\Prod{\mathop{\prod}\limits}
\def\pent#1#2{\lfloor{\frac{#1}{#2}}\rfloor}
\renewcommand\SS{{\bf S}} 
\newcommand\pn{\medskip\par\noindent}
\def\Frac#1#2{{\displaystyle{{#1}\over{#2}}}}
\def\[#1\]{\begin{eqnarray}#1\end{eqnarray}}
\def\Mod#1{\,(\hbox{\rm mod}\,#1)}
\def\Res{\mathrm{Res}\,}
\newcommand\bi{\vspace{-4pt}\begin{itemize}\itemsep -3pt plus 1pt minus 1pt}
\newcommand\ei{\end{itemize}\vspace{-4pt}}
\newcommand\bn{\begin{enumerate}}
\newcommand\en{\end{enumerate}}
\newcommand\cC{\mathcal{C}}
\newcommand\cE{\mathcal{E}}
\newcommand\cZ{\mathcal{Z}}
\newcommand\cV{\mathcal{V}}
\newcommand\RR{\mathbf{R}}
\newcommand\CC{\mathbf{C}}
\newcommand\QQ{\mathbf{Q}}
\newcommand\ZZ{\mathbf{Z}}
\newcommand\Disc{\mathrm{Disc}\,}
\def\sign#1{{\rm sign}\,\bigl( #1 \bigr)}
\newcommand\Sum{\mathop{\sum}\limits}
\def\abs#1{\left \vert #1 \right \vert}
\def\frac#1#2{{\textstyle{{#1} \overwithdelims.. {#2}}}}
\def\Frac#1#2{{\displaystyle{{#1} \overwithdelims.. {#2}}}}
\renewcommand\phi{\varphi}
\newtheorem{theo}{Theorem}
\newtheorem{lemma}[theo]{Lemma}
\newtheorem{proposition}[theo]{Proposition}
\newtheorem{corollary}[theo]{Corollary}
\begin{document}
%

\title{Computing Chebyshev knot diagrams}
\numberofauthors{3}
\author{
\alignauthor
Pierre-Vincent Koseleff\\
       \affaddr{INRIA, Paris-Rocquencourt, SALSA Project}\\
       \affaddr{UPMC-Université Paris 6}\\
       \affaddr{CNRS, UMR 7606, LIP6}\\
       \email{koseleff@math.jussieu.fr}
\alignauthor
Daniel Pecker\\
       \affaddr{UPMC-Université Paris 6}\\
       \email{pecker@math.jussieu.fr}
\alignauthor
Fabrice Rouillier\\
       \affaddr{INRIA, Paris-Rocquencourt, SALSA Project}\\
       \affaddr{UPMC-Université Paris 6}\\
       \affaddr{CNRS, UMR 7606, LIP6}\\
       \email{fabrice.rouillier@inria.fr}
}
\maketitle
\begin{abstract}
A Chebyshev curve  $\cC(a,b,c,\phi)$ has
a parametrization of the form $ x(t)=T_a(t); \  y(t)=T_b(t) ; \
z(t)= T_c(t + \phi), $ where $a,b,c$ are integers, $T_n(t)$ is the
Chebyshev polynomial of degree $n$ and $\phi \in \RR$.
When $\cC(a,b,c,\phi)$ has no double points, it defines a polynomial
knot. We determine all possible knots when
$a$, $b$ and $c$ are given.
\end{abstract}
\keywords{Zero dimensional systems, Chebyshev curves, Lissajous knots,
polynomial knots, factorization of Chebyshev polynomials, minimal polynomial}
\section{Introduction}\label{intro}
It is known that every knot may be obtained from a polynomial embedding
$\RR \to \RR^3$
(\cite{Va,DOS}).

Chebyshev knots are polynomial analogue to Lissajous knots that have been studied by many authors
(see \cite{BDHZ,BHJS,HZ,JP,La2}). All knots are not Lissajous (for example the trefoil and the figure-eight knot).
In \cite{KP3}, it is proved that any knot $K \subset \RR^3$ is a Chebyshev knot,
that is to say there exist positive integers $a$, $b$,   $c$ and a real $\phi$
such that $K$ is isotopic to the curve
$$
\cC(a,b,c,\phi) : x=T_a(t), \, y=T_b(t), \, z=T_c(t+\phi),
$$
where $T_n$ is the Chebyshev polynomial of degree $n$.
This is our motivation for the study of curves $\cC(a,b,c,\phi)$, $\phi \in \RR$.
\pn
In \cite{KP3}, the proof uses theorems on braids by Hoste, Zirbel and Lamm (\cite{HZ,La2}),
and a density argument (Kronecker theorem).

In \cite{KPR}, we developed an effective method
to enumerate all the knots $\cC(a,b,c,\phi), \phi \in \RR$ where $a=3$ or $a=4$, $a$ and
$b$ coprime.
This method was based on continued fraction expansion theory in order to get the minimal
$b$, on resultant computations in order to determine the critical values $\phi$ for which
$\cC(a,b,c,\phi)$ is singular, and on multi-precision interval arithmetic to determine the knot type
of $\cC(a,b,c,\phi)$.
Our goal was to give an exhaustive list of the minimal parametrizations for the first
95 rational knots with less than 10 crossings. We obtained in \cite{KPR} almost every minimal
parametrizations. For 6 of these knots, we know the minimal $b$ and we know that the corresponding $c$ must be $>300$.
\pn In this paper, we develop a more efficient algorithm.
It will give the parametrization of the 6 missing knots in \cite{KPR} and allows to compute all diagrams corresponding to $\cC(a,b,c,\phi)$, $\phi \in \RR$.
One motivation is first to achieve the exhaustive list of certified minimal Chebyshev parametrizations for the first 95 rational knots.
Another is to provide a certified tool for the study of polynomial curves topology.
\pn
Let us first recall some basic facts about knots.
\subsection*{Knot diagrams}\label{diagrams}
The projection of the Chebyshev space curve $\cC(a,b,c,\phi)$
on the $(x,y)$-plane is the Chebyshev curve $\cC(a,b): x=T_a(t), \, y=T_b(t)$.
If $a$ and $b$ are coprime integers, the curve $\cC(a,b,c,\phi)$ is singular
if and only if it has some double points.
It is convenient to consider the polynomials in $\QQ[s,t,\phi]$
\[
P_n= \Frac{T_n(t)-T_n(s)}{t-s},\
Q_n= \Frac{T_n(t+\phi)-T_n(s+\phi)}{t-s}. \label{PQ}
\]
We see that $\cC(a,b,c,\phi)$ is a knot iff $\{(s,t), \, P_a(s,t)=P_b(s,t)=Q_c(s,t,\phi)=0\}$ is empty.
\pn
We shall study the diagram of the curve $\cC(a,b,c,\phi)$, that is to say the plane projection
$\cC(a,b)$ onto the $(x,y)$-plane and the nature (under/over) of the crossings over the double points
of $\cC(a,b)$.
There are two cases of crossing: the right twist and the left twist (see \cite{Mu} and Figure \ref{signf})).
\begin{figure}[th]
\begin{center}
\begin{tabular}{ccc}
{\scalebox{.1}{\includegraphics{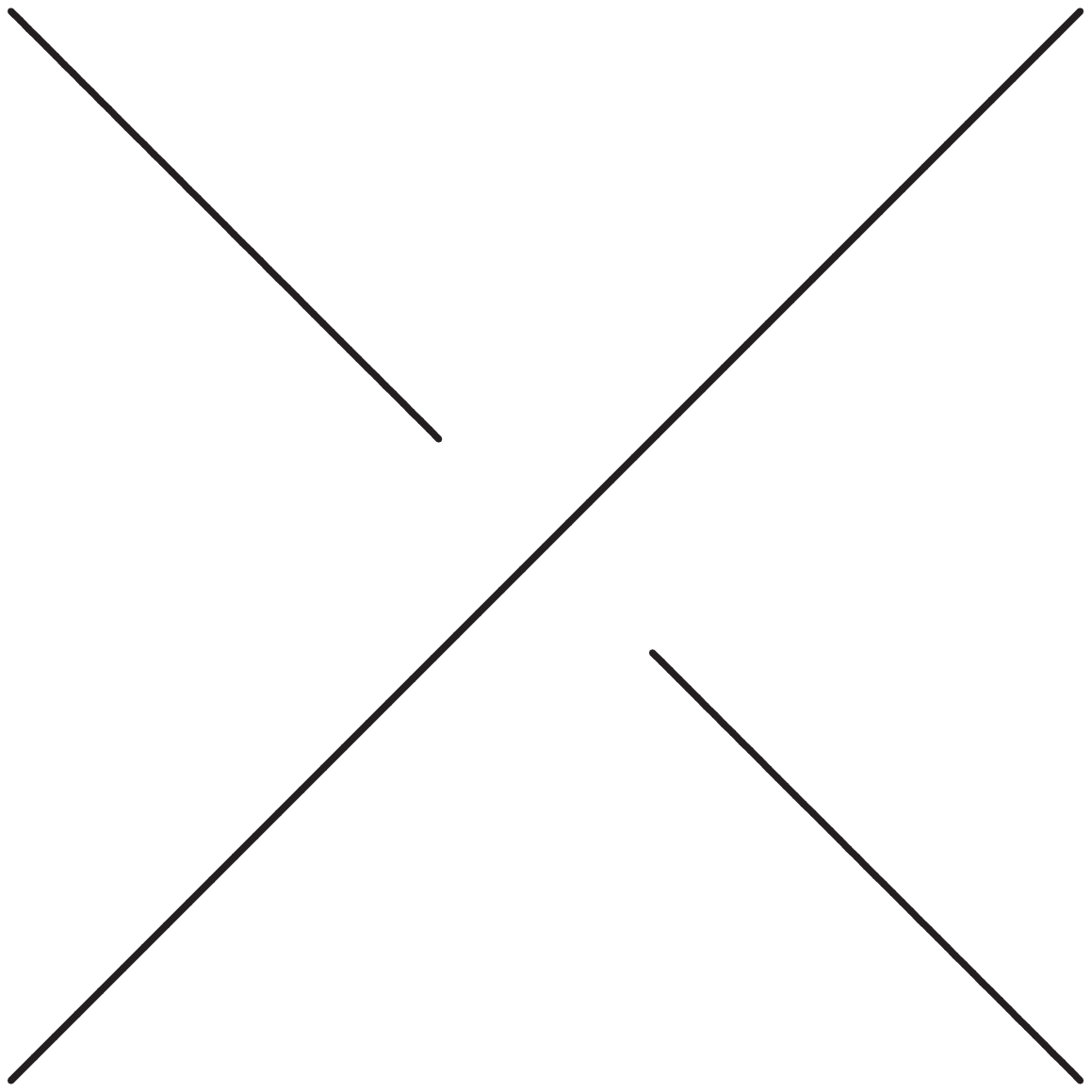}}} &\quad&
{\scalebox{.1}{\includegraphics{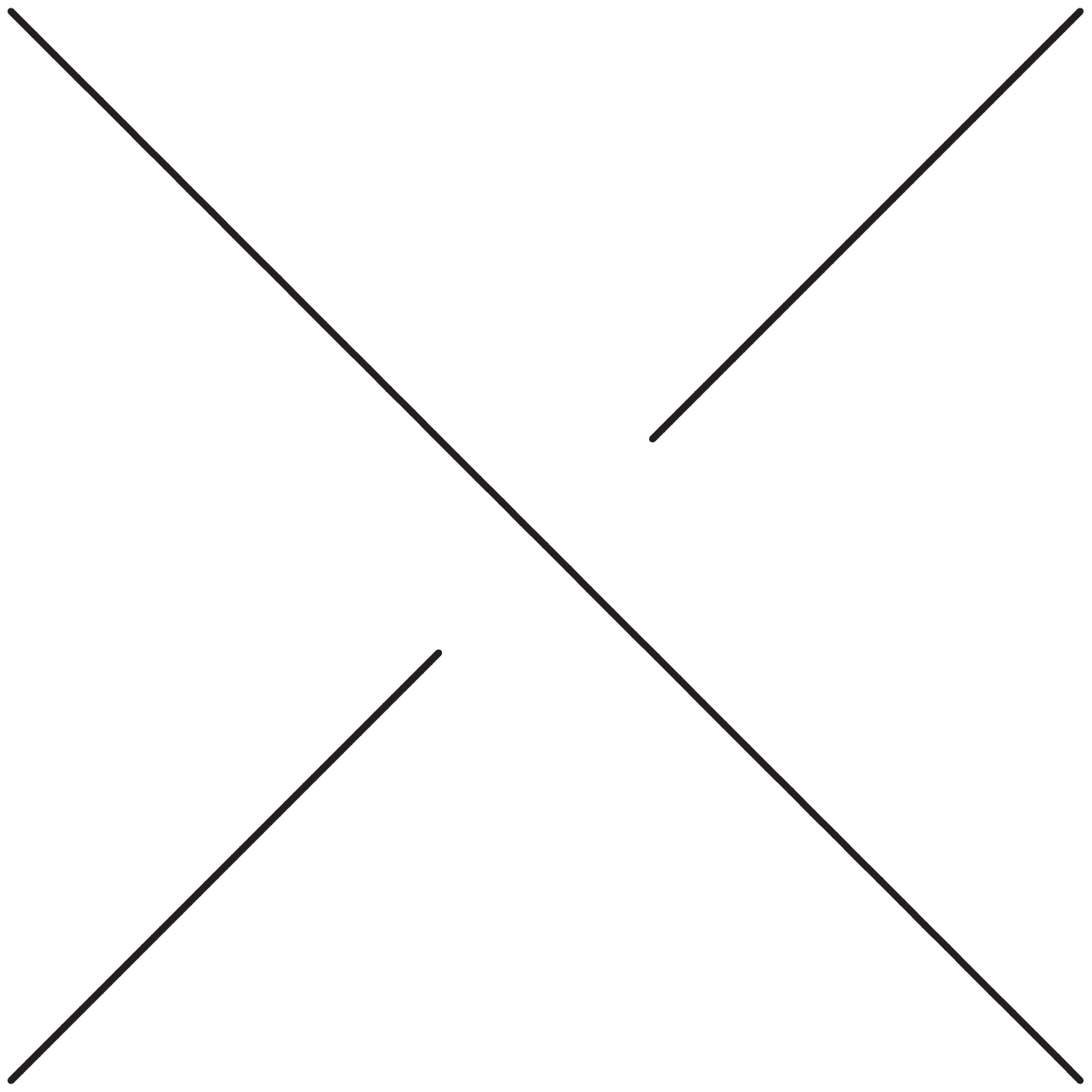}}}
\end{tabular}
\caption{The right twist and the left twist\label{signf}}
\end{center}
\vspace{-10pt}
\end{figure}
In \cite{KPR}, we showed that the nature of the crossing over the double point
$A_{\alpha,\beta}$ corresponding to parameters
($t= \cos(\alpha+\beta), s=\cos(\alpha-\beta)$, $\alpha = \frac{i\pi}a$,
$\beta=\frac{j\pi}b$), is given by the sign of
\begin{equation}
D(s,t,\phi) = Q_c(s,t,\phi) P_{b-a}(s,t,\phi). \label{D}
\end{equation}
$D(s,t,\phi)>0$ if and only if the crossing is a right twist.
\pn
Note that the crossing points of the Chebyshev curve $\cC(a,b): x=T_a(t), \, y=T_b(t)$ lie on
the $(b-1)$ vertical lines $T'_b(x)=0$ and on the $(a-1)$ horizontal lines $T'_a(y)=0$.
We can represent the knot diagram of $\cC(a,b,c,r)$ by a billiard trajectory
(see \cite{KP3}), which is a pure combinatorial object. As an example, consider the knots
$\overline{5}_2 = \cC(4,5,7,0)$,
${5}_2 = \cC(5,6,7,0)$,
$\overline{4}_1 = \cC(3,5,7,0)$.
\begin{figure}[th]
\begin{tabular}{ccc}
\epsfig{file=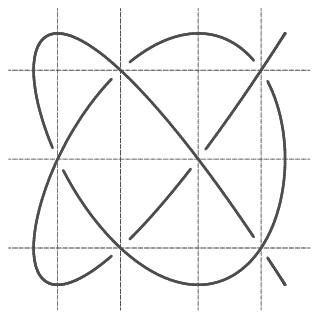,height=2.5cm,width=2.5cm}&
\epsfig{file=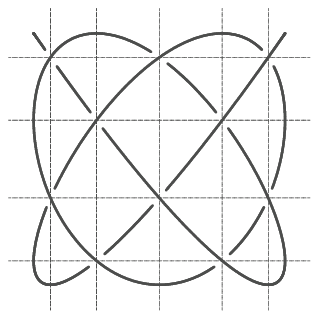,height=2.5cm,width=2.5cm}&
\epsfig{file=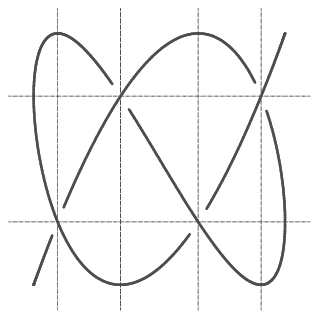,height=2.5cm,width=2.5cm}\\
$\overline{5}_2$&${5}_2$&$\overline{4}_1$
\end{tabular}
\end{figure}
We can represent their diagrams by the billiard trajectories in Figure \ref{bt}.
\begin{figure}[th]
\begin{tabular}{ccc}
\epsfig{file=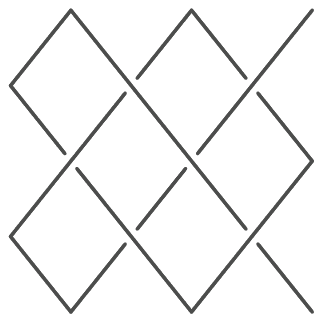,height=2.5cm,width=2.5cm}&
\epsfig{file=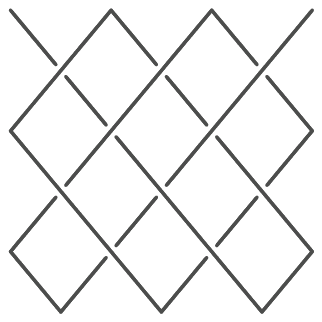,height=2.5cm,width=2.5cm}&
\epsfig{file=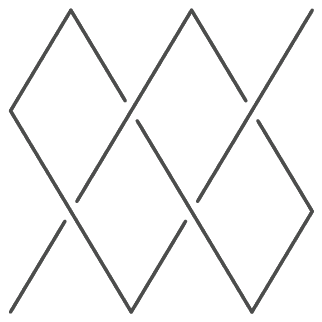,height=2.5cm,width=2.5cm}
\end{tabular}
\caption{Billiard trajectories}\label{bt}
\end{figure}
\pn
When $a=3$ or $a=4$, we obtain the diagrams in the Conway normal form. In this case, the knot is rational
and can be identified very easily by its Schubert fraction (see \cite{Mu,KP4}).
When $b>a\geq 5$, the problem of classification is much more difficult.
Nevertheless, the knowledge of the diagrams allows the computation of all the classical invariants,
like the Conway, Alexander and Jones polynomials (see \cite{Mu}).
\subsection*{Summary}
Our goal is to compute all diagrams of $\cC(a,b,c,\phi)$, where $a$, $b$, $c$ are given integers and $\phi \in \RR$.
From the algorithmic point of view, the description of the Chebyshev knots is strongly connected to the resolution of:
\begin{equation}
\cV_{a,b,c}=\{ P_a(s,t)=0, \, P_b(s,t)=0, \, Q_c(s,t,\phi)=0 \}. \label{theeq}
\end{equation}
We first want to determine the set $\cZ_{a,b,c}$ of critical values $\phi$ for which the curve
$\cC(a,b,c,\phi)$  is singular.
Because $\deg_{\phi} Q_n = n-1$ and the leading term of $Q_n$ is $2^{n-1} n \phi^{n-1}$, we showed
in \cite{KPR}, that $\cV_{a,b,c}$
is 0-dimensional and has at most $(a-1)(b-1)(c-1)$ points. We deduced that
$\abs{\cZ_{a,b,c}} \leq \frac 12 (a-1)(b-1)(c-1)$.
\pn
Let $\cZ_{a,b,c} = \{ \phi_1, \ldots , \phi_n\}$.
The type of the knot $\cC(a,b,c,\phi)$ is given by its diagram which is constant when $\phi$ is in $(\phi_i,\phi_{i+1})$, because
the crossings do not change in this interval.
In order to get all possible knots $\cC(a,b,c,\phi)$, we only need sample points $r_i$ in each $(\phi_i, \phi_{i+1})$ and
to compute the diagram of $\cC(a,b,c,r_i)$.
\pn
We can determine a polynomial $R_{a,b,c} \in \ZZ[\phi]$ such that
$\cZ_{a,b,c} = Z(R)$. It can be defined by
$\langle R \rangle = \langle P_a,P_b, Q_c \rangle \bigcap \QQ[\phi]$ and may be obtained
with Gröbner bases (\cite{CLOS}).
In \cite{KPR}, we optimized the computation by an ad-hoc elimination based on the properties of the curves for $a=3$ or
$a=4$.
Gröbner bases could fully be substituted by some resultant computations in $\ZZ[X,\phi]$,
the systems being generic enough. However, this leads to solve systems of very high degree.
%
\pn%
In the present paper we
decompose the system by working on some (real cyclotomic) extension fields. We show that the system (\ref{theeq})
is equivalent to the resolution of $\frac 12 (a-1)(b-1)\pent{c}2$ second-degree polynomials with coefficients in
$\QQ(\cos\frac\pi{a},\cos\frac\pi{b},\cos\frac\pi{c})$. This result is deduced from geometric properties of the implicit
Chebyshev curves.
\pn
We show some properties of these extensions that allow to simplify the computations.
We can represent the coefficients of the polynomials by intervals and certify the resolution.
We then easily and independently obtain the roots of the second-order polynomials and the main
difficulty becomes to compare them.
A formal method would consist in computing their minimal polynomials over $\QQ$, which is equivalent to
the resolution of (\ref{theeq}).
We use multi-precision interval arithmetic for coding the algebraic numbers $\cos\frac{k\pi}n$ as well as the solutions $\phi$ we get.
If the two intervals are disjoint, the roots are distinct. If not, we can certify whether the resultant of the two second-order polynomials equals 0 or not by Euclidean division.
\pn
In section \ref{cheb},
we first describe the Chebyshev polynomials and the link between their factorizations
and the minimal polynomials of $\cos\frac{k\pi}n$.
This allows us to represent efficiently the elements of
$\QQ(\cos\frac\pi{a},\cos\frac\pi{b},\cos\frac\pi{c})$.
Along the way, we give an explicit factorization of the Chebyshev polynomials.
\pn
In section \ref{curves},
we recall the definition of Lissajous curves and we give their implicit equations.
We study the affine implicit Chebyshev curves $T_n(x)=T_m(y)$ and show
that they have $\pent{(n,m)}2 +1$ irreducible components, $\pent{(n,m)-1}{2}$ being Lissajous curves.
\pn
This allows us to deduce an explicit factorization of $R_{a,b,c}$ as the product of second-degree polynomials
$P_{\alpha,\beta,\gamma}$,
 in section \ref{cv}.
\pn
We show  in section \ref{ccv}, how to obtain $\cZ_{a,b,c}$, the set of roots of $R_{a,b,c}$, with their multiplicities.
The general algorithm is described in \ref{algo}.
This allows us to sample all Chebyshev knots $\cC(a,b,c,\phi)$, $\phi \in \RR$, by choosing
a rational number $r$ in each component of $\RR - \cZ_{a,b,c}$.
\pn
In section \ref{experiments}, we find an exhaustive and complete list of the minimal parametrization for the first 95 rational knots.
The worst case appears with the knot $10_{33} = \cC(4,13,856,1/328)$, with $\deg R_{a,b,c}= 15390$. We discuss the efficiency of our algorithms and compare with those of \cite{KPR}.
\section{Chebyshev polynomials}\label{cheb}
Chebyshev polynomials and their algebraic properties play a
central role here. The curves we will study are defined by Chebyshev polynomials. The algebraic
extensions we will consider are spanned by their roots and we need to know their factors.
In this section we recall some classical properties of Chebyshev polynomials.
We will also show the link between their effective factorization in $\QQ[t]$
and the minimal polynomial of $\cos\frac{k\pi}n$.
\pn
The Chebyshev polynomials of the {\em first kind} are defined
by the second-order linear recurrence
\[
T_0 = 1, \, T_1 = t, \, T_{n+1} = 2tT_n - T_{n-1}.\label{rl1}
\]
$T_n \in \ZZ[t]$ and
satisfies the identity $T_n(\cos\theta) = \cos n\theta$, and more generally
$T_n \circ T_m = T_{nm}$. We 
have
$$
T_n = 2^{n-1}\Prod_{k=0}^{n-1} (t-\cos\frac{(2k+1)\pi}{2n}).
$$
Let $V_n$ be the Chebyshev
polynomials of the {\em second kind} defined by the second-order linear recurrence (the same as
in (\ref{rl1}))
$$
V_0 = 0, \, V_1 = 1, \, V_{n+1} = 2tV_n - V_{n-1}.
$$
$V_n \in \ZZ[t]$ and
satisfies $V_n(\cos\theta) = \Frac{\sin n\theta}{\sin\theta}$.
We have
$$
V_n = 2^{n-1} \Prod_{k=1}^{n-1} (t-\cos\frac{k\pi}n),
$$
and therefore $V_d \vert V_n$ when $d \vert n$.
Let us summarize some useful results in the following
\begin{lemma} \label{pp}
We have the following properties:
\bi
\item $T'_n(t)=0 \Rightarrow T_n(t)=\pm 1$
\item $T_n(t)=\pm 1 \Rightarrow T'_n(t)=0$ or  $t = \pm 1$.
\item $T_n(t)=y$ has $n$ real solutions iff $\abs y <1$.
\item $T_n(t)=1$ has $\pent n2$ real solutions.
\item $T_n(t)=-1$ has $\pent{n-1}2$ real solutions.
\ei
\end{lemma}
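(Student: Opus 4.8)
The plan is to derive all five items from two polynomial identities that encode the trigonometric origin of $T_n$ and $V_n$, and then to read off the counting statements by the substitution $t=\cos\theta$. First I would record that $T_n'=nV_n$ and that $T_n^2-1=(t^2-1)V_n^2$. Both are checked on the dense set $t=\cos\theta$ using $T_n(\cos\theta)=\cos n\theta$ and $V_n(\cos\theta)=\sin n\theta/\sin\theta$: differentiating the first gives $T_n'(\cos\theta)=n\sin n\theta/\sin\theta=nV_n(\cos\theta)$, while $\cos^2 n\theta-1=-\sin^2 n\theta=(\cos^2\theta-1)(\sin n\theta/\sin\theta)^2$ gives the second; since two polynomials agreeing on infinitely many points coincide, both hold in $\ZZ[t]$. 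These dispatch the first two items immediately. If $T_n'(t)=0$ then $V_n(t)=0$, so $T_n(t)^2=1$, i.e. $T_n(t)=\pm1$; conversely if $T_n(t)=\pm1$ then $(t^2-1)V_n(t)^2=0$, so either $t=\pm1$ or $V_n(t)=0$, the latter meaning $T_n'(t)=0$.

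For the third item I would pass to $t=\cos\theta$, a decreasing bijection from $[0,\pi]$ onto $[-1,1]$ that turns $T_n(t)=y$ into $\cos n\theta=y$. The forward direction is a count: for $\abs y<1$ the equation $\cos n\theta=y$ has exactly $n$ solutions $\theta\in[0,\pi]$, one in the interior of each of the $n$ subintervals $[(j-1)\pi/n,\,j\pi/n]$ on which $\cos n\theta$ is monotone, and distinct $\theta$ yield distinct $t\in(-1,1)$; hence $T_n-y$ has $n$ distinct real roots and, being of degree $n$, no others. The converse uses that $\abs{T_n(t)}>1$ for $\abs t>1$ (write $t=\cosh u$ when $t>1$, so $T_n(t)=\cosh nu>1$, and use $T_n(-t)=(-1)^nT_n(t)$ for $t<-1$), so every real root of $T_n-y$ lies in $[-1,1]$ where $\abs{T_n}\le1$; thus $n$ real roots force $\abs y\le1$, and when $\abs y=1$ the interior solutions are critical points, hence double roots by the first item, leaving fewer than $n$ distinct roots. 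So exactly $n$ distinct real solutions occur iff $\abs y<1$.

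The last two items are the boundary cases $y=\pm1$, handled by the same substitution: $\cos n\theta=1$ forces $n\theta\in2\pi\ZZ$, i.e. $\theta=2k\pi/n$, while $\cos n\theta=-1$ forces $\theta=(2k+1)\pi/n$, and one enumerates those $\theta$ in $[0,\pi]$. \emph{This boundary bookkeeping is the only delicate point}: whether the extreme values $\theta=0$ and $\theta=\pi$ (equivalently $t=1$ and $t=-1$) are attained depends on the parity of $n$, and it is exactly this dependence that collapses the naive enumeration into the floor functions $\pent n2$ and $\pent{n-1}2$. I expect the main obstacle to be stating the counting convention consistently across both parities so that the two floors come out correctly; once the endpoints $t=\pm1$ are correctly accounted for, the interior solutions are precisely the critical points produced by $V_n$, and the tally follows from the explicit roots $\cos\frac{k\pi}{n}$ of $V_n$ recorded in the product formula above.
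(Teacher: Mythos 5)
Your first three items are correct, and they use essentially the paper's own ingredients ($T'_n=nV_n$, the roots $\cos\frac{k\pi}{n}$ of $V_n$, the values $(-1)^k$ there, monotonicity for $\abs{t}\geq\cos\frac{\pi}{n}$); the identity $T_n^2-1=(t^2-1)V_n^2$ is a nice self-contained route to item 2. The genuine gap is in items 4 and 5, exactly at the ``boundary bookkeeping'' you defer: no counting convention will make both stated floors come out, because your own enumeration, carried to the end, contradicts them. As you showed, every real solution of $T_n(t)=\pm 1$ lies in $[-1,1]$, so the solutions are $t=\cos\theta$ with $\theta\in[0,\pi]$ and $n\theta\in\pi\ZZ$. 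For $y=1$ this gives $\theta=2k\pi/n$ with $0\leq k\leq n/2$, i.e. $\pent{n}{2}+1$ distinct real solutions, not $\pent{n}{2}$ (for instance $T_4(t)=1$ has the three solutions $t=0,\pm 1$, not two); for $y=-1$ it gives $\pent{n-1}{2}+1$, not $\pent{n-1}{2}$. The only other natural convention --- counting just the solutions that are local extrema, i.e. roots of $V_n$ --- yields $\pent{n-1}{2}$ solutions for $y=1$ (even $k$ in $\{1,\ldots,n-1\}$) and $\pent{n}{2}$ for $y=-1$ (odd $k$): the two stated floors, but interchanged. For even $n$, item 5 is false under every reading: all solutions of $T_n(t)=-1$ are then interior critical points and there are exactly $n/2>\pent{n-1}{2}$ of them ($T_4(t)=-1$ has the two solutions $t=\pm 1/\sqrt{2}$, not one). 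So the step you flagged as delicate is not a matter of stating a convention carefully; your plan necessarily stalls there.

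The resolution is that items 4 and 5 as printed are off by one (all-solutions reading) or swapped (critical-solutions reading), and what the paper actually uses downstream is the critical-point count: the corollary on singular points of $T_n(x)=T_m(y)$ requires $T'_n(x)=T'_m(y)=0$, and its total $\pent{n-1}{2}\pent{m-1}{2}+\pent{n}{2}\pent{m}{2}$ is a symmetric sum, so the interchange is invisible there (and for odd $n$ the two floors coincide anyway). The paper's own proof is precisely the one-liner you started from --- extrema at $\cos\frac{k\pi}{n}$ with values $(-1)^k$, monotone outside --- and never performs the final tally; your write-up, pushed to completion, is what exposes the discrepancy. The honest way to finish is to prove the counts your setup actually produces, namely that $T_n$ has $\pent{n-1}{2}$ local extrema with value $+1$ and $\pent{n}{2}$ with value $-1$, which is the form of the statement the rest of the paper needs.
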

\begin{proof}
From $T'_n = n V_n$, we deduce that $t\mapsto T_n(t)$ is monotonic when $\abs t \geq \cos\frac{\pi}n$,
that $T_n$ has $n-1$ local extrema for $t_k = \cos\frac{k\pi}n$ and $T_n(t_k)=(-1)^k$.
\end{proof}
\subsection*{Minimal Polynomial of $\mathbf{\cos \frac{k\pi}n}$}
Let $\zeta_n = e^{\frac{2i\pi}n}$. It is well known (\cite{WZ}) that
the degree of $\QQ(\zeta_n)$ is $\phi(n)$ where $\phi$ is the Euler function.
$\QQ(\cos\frac{2\pi}n) = \QQ(\zeta_n) \bigcap \RR$ and
the minimal polynomial over $\QQ$ of $\cos\frac{2\pi}n$ has degree
$\frac 12 \phi(n)$ when $n>1$. Its roots are $\cos\frac{2k\pi}n$ where
$k$ is coprime with $n$. Consequently, the minimal polynomial
$M_n$ of $\cos\frac{\pi}n$ has degree $\frac 12 \phi(2n)$, when $n>1$.
Its roots are $t_k = \cos\frac{k\pi}n$ where $(k,n)=1$, and $k$ is odd.
$M_n(-t)$ is the minimal polynomial of $\cos\frac{2\pi}n$. The leading
coefficient of $M_n$ is $2^{\phi(2n)/2}$.
\pn
{\bf Remark.} $\cos\frac{k\pi}n \in \QQ$ iff $\frac 12 \phi(2n)=1$ or $n=1$, that is
$n=1, 2,3$. In this case we get $2\cos\frac{k\pi}n \in \ZZ$.
\pn
We deduce the following
\begin{proposition}\label{cminf}
Let $P_n$ be defined by $P_0=1$, $P_1=2t-1$, $P_{n+1} = 2t P_n + P_{n-1}$. Then we have
$(-1)^n P_n(-T_2) = V_{2n+1}$ and
\[P_n = \Prod_{d\vert 2n+1} M_d\label{minf} \]
\end{proposition}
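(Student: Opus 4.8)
The plan is to prove the functional identity first and then read off the factorization from it. Write $W_n := (-1)^n P_n(-T_2)$, a polynomial in $\ZZ[t]$ of degree $2n$. First I would settle the two base cases by direct computation: $W_0 = P_0(-T_2)=1=V_1$, and $W_1 = -P_1(-T_2) = 2T_2+1 = 4t^2-1 = V_3$. For the inductive step, substitute $x=-T_2$ into the recurrence defining $P_n$; because the substitution turns the middle coefficient $2x$ into $-2T_2$ while the prefactor $(-1)^n$ alternates, the defining recurrence of $P_n$ becomes the relation $W_{n+1}=2T_2\,W_n-W_{n-1}$. On the other side, the subsequence $(V_{2n+1})$ obeys the very same second-order relation: from $\sin((2n+3)\theta)+\sin((2n-1)\theta)=2\cos 2\theta\,\sin((2n+1)\theta)$ together with $T_2(\cos\theta)=\cos 2\theta$ one gets $V_{2n+3}=2T_2\,V_{2n+1}-V_{2n-1}$. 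Since $W_n$ and $V_{2n+1}$ share their first two terms and satisfy the same recurrence, induction yields $(-1)^nP_n(-T_2)=V_{2n+1}$.

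For the factorization I would exploit the explicit product for $V$ recalled above. From $V_{2n+1}=2^{2n}\Prod_{k=1}^{2n}(t-\cos\frac{k\pi}{2n+1})$ and the substitution $-T_2=1-2t^2$, the $2n$ roots $\cos\frac{k\pi}{2n+1}$ collapse in pairs $\{\pm\cos\frac{k\pi}{2n+1}\}$ under $t\mapsto -T_2(t)$, so $P_n$ has exactly the $n$ simple roots $\cos\frac{j\pi}{2n+1}$ with $j$ odd and $1\le j\le 2n-1$, and leading coefficient $2^n$. Since $-1=\cos\pi$ (the excluded value $j=2n+1$) never occurs in this range, the divisor $d=1$ contributes no factor, which explains why the product is effectively over $d>1$.

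Next I would sort these roots by their reduced denominator. For each odd divisor $d>1$ of $2n+1$, writing $\frac{j}{2n+1}$ in lowest terms produces exactly the arguments $\frac{k}{d}$ with $(k,d)=1$; the numerator $k$ stays odd because $2n+1$, and hence every divisor, is odd. Thus the roots of $P_n$ lying at ``level $d$'' are precisely $\cos\frac{k\pi}{d}$ with $(k,d)=1$ and $k$ odd, i.e. exactly the roots of the minimal polynomial $M_d$ described above. As $P_n\in\ZZ[t]$ and each $M_d$ is irreducible over $\QQ$, every $M_d$ divides $P_n$, and the $M_d$ for distinct $d$ are pairwise coprime, so $\Prod_{d\mid 2n+1}M_d$ divides $P_n$. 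Comparing degrees through $\sum_{d\mid 2n+1}\phi(d)=2n+1$ (whence $\sum_{d>1}\frac 12\phi(d)=n=\deg P_n$) and comparing leading coefficients ($2^{\sum\phi(d)/2}=2^n$) forces equality.

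The main obstacle is the bookkeeping in the third step: one must check that reducing the fractions $j/(2n+1)$ distributes the $n$ roots of $P_n$ bijectively among the conjugate sets of the $M_d$, with no repetition and with every required conjugate (odd $k$ coprime to $d$) actually appearing. This is the divisor identity $\sum_{d\mid 2n+1}\phi(d)=2n+1$ in disguise, but it must be combined with the parity constraint; the decisive point is that oddness of the numerator is preserved under reduction precisely because $2n+1$ is odd, which is exactly what makes the minimal polynomials $M_d$ of $\cos\frac{\pi}{d}$ (rather than of $\cos\frac{2\pi}{d}$) the correct factors.
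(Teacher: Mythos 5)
Your argument is correct in substance and follows the paper's own two-step strategy: the identity is obtained from the base cases $V_1$, $V_3$ together with a common second-order recurrence, and the factorization from locating roots of $P_n$ among the numbers $\cos\frac{k\pi}{d}$, invoking irreducibility of the $M_d$, and counting degrees. One subtlety deserves to be made explicit rather than passed over silently. With the recurrence as printed, $P_{n+1}=2tP_n+P_{n-1}$, the substitution $x=-T_2$ gives $W_{n+1}=2T_2W_n+W_{n-1}$, not $W_{n+1}=2T_2W_n-W_{n-1}$ as you claim: the prefactors $(-1)^{n+1}$ and $(-1)^{n-1}$ on the last term are equal, so no sign flips there. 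In fact the proposition is false as printed: it gives $P_2=4t^2-2t+1$, whence $(-1)^2P_2(-T_2)=16t^4-12t^2+3\neq V_5=16t^4-12t^2+1$. The intended recurrence is $P_{n+1}=2tP_n-P_{n-1}$, which gives $P_2=4t^2-2t-1=M_5$ and for which your derivation of $W_{n+1}=2T_2W_n-W_{n-1}$ is valid. The paper's own proof asserts exactly the same minus-sign recurrence for $W_n$, so this is a typo in the statement that both of you correct tacitly; a careful write-up should note the correction, since as it stands your recurrence step does not follow from the hypothesis you quote.

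Where you genuinely depart from the paper is in the factorization step, and your version is the more careful one. The paper exhibits, for each divisor $d=2m+1$ of $2n+1$, only the single root $\cos\frac{\pi}{d}=-\cos\frac{2m\pi}{2m+1}$ of $P_n$, deduces $M_d\vert P_n$, and finishes with a degree count; you instead enumerate all $n$ roots $\cos\frac{j\pi}{2n+1}$ ($j$ odd, $1\leq j\leq 2n-1$) and sort them by reduced denominator. This extra bookkeeping buys two things the paper fudges. First, it shows that $d=1$ contributes no factor (indeed $-1$ is not a root of $P_n$), whereas the paper's displayed count $\sum_{d\vert 2n+1}\deg M_d=1+\frac 12\sum_{d\vert 2n+1,\,d>1}\phi(2d)=n$ is off by one: the middle expression equals $n+1$ because it includes $\deg M_1=1$, so the stated product only makes sense with the convention $M_1=1$, which your proof makes explicit. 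Second, you compare leading coefficients at the end; this is genuinely needed to upgrade ``$\prod_{d\vert 2n+1,\,d>1}M_d$ divides $P_n$ and has the same degree'' to equality, and the paper omits it.
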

\begin{proof}
We have  $P_0(-T_2) = V_1$, $P_1(-T_2) = -2T_2 -1 = -V_3$. The sequences $V_{2n+1}$ and $(-1)^n P_n(-T_2)$ satisfy
the same recurrence formula: $V_{2n+3} = 2T_2 V_{2n+1} - V_{2n-1}$.
Let $d=2m+1$ be a divisor of $2n+1$ and consider $t=\cos\frac{\pi}d = -\cos 2 \frac{m\pi}{2m+1}$. We have
$(-1)^n P_n(t) = V_{2n+1}(\cos \frac{m\pi}{2m+1}) = 0$. Thus $M_d \vert P_n$ and we conclude
using the fact that \\
$\Sum_{d\vert 2n+1} \deg M_d = 1 + \frac 12 \Sum_{d \vert 2n+1, d>1} \phi(2d) = n = \deg P_n.$
\end{proof}
\begin{lemma}\label{mine}
We have $M_{2^k m} = M_{m}(T_{2^k})$ if $m$ is odd.
\end{lemma}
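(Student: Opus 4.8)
The plan is to recognise $M_m(T_{2^k})$ as a rational polynomial that vanishes at $\cos\frac{\pi}{2^k m}$, so that the irreducible minimal polynomial $M_{2^k m}$ must divide it, and then to show the two polynomials have the same degree and the same leading coefficient, which forces equality. I will assume $m>1$: the case $m=1$ is genuinely exceptional, since there $M_1(T_{2})=T_2+1=2T_1^2\neq 2t=M_2$, the discrepancy coming from the fact that the degree and leading-coefficient formulas for $M_n$ only hold for $n>1$.

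First I would evaluate $T_{2^k}$ at $\cos\frac{\pi}{2^k m}$. Using the composition law $T_n\circ T_m=T_{nm}$ together with $T_n(\cos\theta)=\cos n\theta$, one gets $T_{2^k}\bigl(\cos\frac{\pi}{2^k m}\bigr)=\cos\frac{2^k\pi}{2^k m}=\cos\frac{\pi}{m}$. Since $M_m$ annihilates $\cos\frac{\pi}{m}$ by definition, and since $M_m(T_{2^k})\in\QQ[t]$ (because $T_{2^k}\in\ZZ[t]$), the polynomial $M_m(T_{2^k})$ vanishes at $\cos\frac{\pi}{2^k m}$. As $M_{2^k m}$ is the minimal polynomial of that algebraic number, it divides $M_m(T_{2^k})$ in $\QQ[t]$.

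It then remains to compare degrees and leading coefficients, which is where the hypothesis that $m$ is odd enters: it gives $\phi(2^{k+1}m)=\phi(2^{k+1})\phi(m)=2^k\phi(m)$ and $\phi(2m)=\phi(m)$. Hence $\deg M_{2^k m}=\frac12\phi(2^{k+1}m)=2^{k-1}\phi(m)$, while $\deg M_m(T_{2^k})=2^k\deg M_m=2^k\cdot\frac12\phi(2m)=2^{k-1}\phi(m)$. The degrees agree, so the divisibility above forces $M_m(T_{2^k})=c\,M_{2^k m}$ for some constant $c\in\QQ$. To pin down $c$ I would compare leading coefficients: that of $M_n$ is $2^{\phi(2n)/2}$, so $M_{2^k m}$ has leading coefficient $2^{2^{k-1}\phi(m)}$, whereas $M_m(T_{2^k})$ has leading coefficient $2^{\phi(m)/2}\cdot(2^{2^k-1})^{\phi(m)/2}=2^{2^{k-1}\phi(m)}$ as well, using that $T_{2^k}$ has leading coefficient $2^{2^k-1}$. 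Thus $c=1$ and $M_{2^k m}=M_m(T_{2^k})$.

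The argument is essentially bookkeeping rather than a deep obstacle: the only delicate point is the twin computation at the end, where the degree matching and the leading-coefficient matching both rely on $\gcd(2,m)=1$ to split the Euler $\phi$-values and so hinge squarely on $m$ being odd. I would also flag the exceptional value $m=1$ noted above, so that the statement is understood for odd $m>1$.
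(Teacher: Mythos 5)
Your proof is correct and takes essentially the same route as the paper's: observe that $M_m(T_{2^k})$ vanishes at $\cos\frac{\pi}{2^k m}$, deduce $M_{2^k m}\,\vert\, M_m(T_{2^k})$, and conclude by matching leading terms. The paper compresses the degree and leading-coefficient bookkeeping into one sentence, whereas you carry it out explicitly via the multiplicativity of $\phi$ and also flag the degenerate case $m=1$ (where the statement indeed fails for $k\geq 1$), which the paper silently ignores.
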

\begin{proof}
We have $M_{m}\circ T_{2^k} (\cos\frac{\pi}{2^km}) = 0$ and $(2^k,m)=1$  so $M_{2^km} \vert M_{m}(T_{2^k})$.
We conclude since $M_{2^k m}$ and $M_{m}(T_{2^k})$ have same leading term.
\end{proof}
The relations between the minimal polynomial of $\cos\frac{2\pi}n$ and the factorization
of $T_{\pent n2+1}-T_{\pent n2}$ is known (\cite{WZ}).
Formula (\ref{minf}) together with Lemma \ref{mine} give also an algorithm to compute $M_n$.
\pn
The number of factors of $T_n$ is known (\cite{Hs}).
We give here the relation between the Chebyshev polynomials $T_n$ and $V_n$ and the polynomials $M_n$.
\begin{proposition}{\bf Factorization of $T_n$ and $V_n$.}\\
We have the following factorizations in irreducible factors
\begin{eqnarray*}
V_{2^k(2m+1)} &=& \prod_{d\vert 2m+1} \left (\prod_{i=1}^k M_{d}(T_{2^i}) \right )\cdot M_d(t) M_d(-t)\\
T_{2^k (2m+1)} &=& \Frac 12 \prod_{d\vert 2m+1} M_d(T_{2^{k+1}})
\end{eqnarray*}
where $M_n$ is the minimal polynomial of $\cos\frac\pi n$.
\end{proposition}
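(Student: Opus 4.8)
The plan is to use that both $V_n$ and $T_n$ are squarefree with explicitly known roots, and to recognise each Galois orbit of roots as the full root set of one minimal polynomial $M_d$ (or of $M_d(-t)$, the minimal polynomial of $\cos\frac{2\pi}{d}$). From the product formulas $V_n = 2^{n-1}\prod_{j=1}^{n-1}(t-\cos\frac{j\pi}{n})$ and $T_n = 2^{n-1}\prod_{j=0}^{n-1}(t-\cos\frac{(2j+1)\pi}{2n})$ the listed cosines are pairwise distinct, so every irreducible factor occurs exactly once and it suffices to (i) sort the roots into conjugacy classes, (ii) rewrite the classes with even denominator through Lemma \ref{mine}, and (iii) check the leading coefficient. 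Throughout I write $n=2^k Q$ with $Q=2m+1$ odd.

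For $V_n$ I would classify a root $\cos\frac{j\pi}{n}$ by reducing $j/n$ to lowest terms $j_0/d$, so that $d=n/\gcd(j,n)$ runs over the divisors $d>1$ of $n$ and $\gcd(j_0,d)=1$. Since the roots of $M_d$ are precisely the $\cos\frac{j_0\pi}{d}$ with $(j_0,2d)=1$, that is with $j_0$ odd and coprime to $d$, I split by the parity of $j_0$: odd numerators give the roots of $M_d$, whereas even numerators — possible only when $d$ is odd, where $\cos\frac{j_0\pi}{d}$ is then a conjugate of $\cos\frac{2\pi}{d}$ — give the roots of $M_d(-t)$. Writing an even denominator as $d=2^i e$ with $e\mid Q$ odd and $1\le i\le k$, Lemma \ref{mine} turns its factor into $M_{2^i e}=M_e(T_{2^i})$. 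Grouping everything by the odd part $e\mid Q$ then yields, for each such $e$, the factor $M_e(t)M_e(-t)$ from $i=0$ and the factors $M_e(T_{2^i})$, $1\le i\le k$, from the even denominators — exactly the term indexed by $d=e$ in the statement.

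The case of $T_n$ is cleaner, since every root $\cos\frac{l\pi}{2n}$ has $l$ odd. Reducing $l/(2^{k+1}Q)$ to $l_0/(2^{k+1}e)$ keeps $l_0$ odd and coprime to $2^{k+2}e$, so each root lies on $M_{2^{k+1}e}=M_e(T_{2^{k+1}})$ and no $M_d(-t)$ ever appears. As $e$ ranges over the divisors of $Q$ one gets $\prod_{d\mid Q}M_d(T_{2^{k+1}})$; the prefactor $\frac12$ is forced by the leading coefficient, since $\sum_{d\mid Q}\phi(d)=Q$ and $\phi(2^{k+2}d)=2^{k+1}\phi(d)$ for odd $d$ give a total degree $\sum_{d\mid Q}2^k\phi(d)=2^kQ=n=\deg T_n$ and leading coefficient $2^{n}$, whose half is $2^{n-1}$.

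The step I expect to be the main obstacle is the final accounting of degrees, leading coefficients and signs, together with the degenerate index $d=1$. Here $M_1$ has to be read as the trivial constant in the terms $M_1(t)M_1(-t)$, while the composed terms $M_1(T_{2^i})$ must be interpreted via Lemma \ref{mine} as the genuine factors $M_{2^i}$ that govern the pure power-of-two denominators; verifying that this reading is consistent, and that the signs produced by the odd-degree factors $M_d(-t)$ are absorbed by the normalisation, is exactly what upgrades the two displayed identities from ``up to a constant'' to equalities. The identity $\sum_{d\mid Q}\phi(d)=Q$ and the uniform rule that $M_N$ has leading coefficient $2^{\deg M_N}$ are the two facts that close this bookkeeping.
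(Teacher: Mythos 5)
Your proof is correct and follows essentially the same route as the paper's: sort the roots of $V_n$ and $T_n$ into the root sets of the polynomials $M_d(t)$, $M_d(-t)$ and $M_{2^id}$ (the latter rewritten as compositions $M_d(T_{2^i})$ via Lemma \ref{mine}), then pin down the constants by comparing degrees and leading coefficients. You are in fact more careful than the paper's own terse argument, since you explicitly flag the degenerate index $d=1$ and the sign normalisation of the odd-degree factors $M_d(-t)$, which the statement and proof in the paper silently gloss over.
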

\begin{proof}
The factorization of $V_n$ is obtained by comparing its roots with those of $M_d(\pm t)$, when $d\vert n$.
Let $d$ be an odd divisor of $n$. We write $n=2^k\cdot d_1\cdot d$, where $d_1$ is odd.
$\cos\frac{d_1\pi}{2n}=\cos\frac{\pi}{2^{k+1}d}$ is a root of $T_n$ so
$M_{2^{k+1}d} \vert T_n$. We deduce the factorization by comparing the leading terms.
\end{proof}
\section{Chebyshev and Lissajous curves}\label{curves}
The following proposition will explain the notions of Lissajous and Chebyshev
curves.
\begin{proposition} \label{equa}
The parametric curve
$$
\cC: x=\cos( at), \, y = \cos(bt+\phi), \, t \in \CC,
$$
where $a,b$ are coprime integers ($a$ odd) and $\phi \in \RR$ admits the equation
\[
T_b(x)^2 + T_a(y)^2 - 2 \cos(a\phi)T_b(x)T_a(y) - \sin^2(a\phi)=0.\label{eqE}
\]
\begin{enumerate}
\item If $a\phi \not = k \pi$, this equation is irreducible. $\cC$ is called a Lissajous
curve. Its real part is 1-1 parametrized for $t \in [0,2\pi]$.
\item If $a\phi = k \pi$, this equation is equivalent to $T_b(x)=(-1)^k T_a(y)$.
$\cC$ is called a Chebyshev curve. It can be parametrized by $x= T_a(t), \, y=(-1)^k T_b(t)$.
\end{enumerate}
\end{proposition}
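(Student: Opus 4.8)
The plan is to eliminate the parameter to obtain (\ref{eqE}) and then to split the analysis according to whether the conic underlying it degenerates. To derive the equation, set $X=T_b(x)$ and $Y=T_a(y)$. Since $T_n(\cos\theta)=\cos n\theta$, along $\cC$ we have $X=T_b(\cos at)=\cos(abt)$ and $Y=T_a(\cos(bt+\phi))=\cos(abt+a\phi)$. Writing $Y=X\cos(a\phi)-\sin(abt)\sin(a\phi)$, isolating the sine term, squaring, and substituting $\sin^2(abt)=1-X^2$ yields $Y^2-2\cos(a\phi)XY+X^2-\sin^2(a\phi)=0$, which is exactly (\ref{eqE}); note that neither $(a,b)=1$ nor the parity of $a$ is used here.

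\textbf{The degenerate case $a\phi=k\pi$.} Now $\cos(a\phi)=(-1)^k$ and $\sin(a\phi)=0$, so the left-hand side of (\ref{eqE}) becomes the perfect square $\bigl(T_b(x)-(-1)^kT_a(y)\bigr)^2$, whence (\ref{eqE}) is equivalent to $T_b(x)=(-1)^kT_a(y)$. To verify the proposed parametrization I substitute $x=T_a(s)$, $y=(-1)^kT_b(s)$: using $T_a\circ T_b=T_{ab}=T_b\circ T_a$ together with the parity hypothesis that $a$ is odd, so that $T_a$ is an odd function, I obtain $T_a(y)=(-1)^kT_{ab}(s)=(-1)^kT_b(x)$, exactly the required identity. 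That this map is onto the real curve is then checked directly from these formulas.

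\textbf{The generic case $a\phi\neq k\pi$.} For irreducibility I would first treat the conic $F(X,Y)=Y^2-2\cos(a\phi)XY+X^2-\sin^2(a\phi)$: seen as a quadratic in $Y$, its discriminant is $4\sin^2(a\phi)(1-X^2)$, which is nonzero (because $\sin(a\phi)\neq0$) and has the two simple roots $X=\pm1$, hence is not a square in $\CC(X)$, so $F$ does not factor. The genuine difficulty, which I expect to be the main obstacle, is that irreducibility of $F$ does not directly give irreducibility of the composite $E(x,y)=F(T_b(x),T_a(y))$: pulling back along the Chebyshev covers $x\mapsto T_b(x)$ and $y\mapsto T_a(y)$ might a priori split the curve, and the discriminant argument only rules out factorizations that are themselves polynomial in $T_a(y)$.

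To get around this I would argue through the parametrization. Putting $w=e^{it}$ exhibits $\cC$ as the image of the morphism $w\mapsto\bigl(\frac{1}{2}(w^a+w^{-a}),\,\frac{1}{2}(e^{i\phi}w^b+e^{-i\phi}w^{-b})\bigr)$ defined on the irreducible curve $\CC^{*}$; its Zariski closure is therefore irreducible, cut out by an irreducible polynomial $E_0$ dividing $E$. Using $(a,b)=1$ and $a\phi\neq k\pi$ one checks that this map is generically one-to-one — two parameters give the same point only at the finitely many self-crossings — which is precisely the asserted 1-1 parametrization of the real part for $t\in[0,2\pi]$. Generic injectivity forces a general vertical line $x=x_0$ to meet the curve in $2a=\deg_y E$ distinct points, so $\deg_y E_0=\deg_y E$; since the leading coefficient of $E$ in $y$ is the nonzero constant coming from $T_a(y)^2$, $E$ has no factor depending on $x$ alone, and hence $E=E_0$ up to a scalar. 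This shows $E$ is irreducible and finishes the generic case.
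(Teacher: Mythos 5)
Your proof is correct, and it shares the paper's skeleton: the same trigonometric squaring to obtain (\ref{eqE}), the same perfect-square observation when $a\phi=k\pi$, and irreducibility via the principle that the parametrized curve must lie inside a single irreducible factor, finished by degree counting. But you execute the key irreducibility step genuinely differently, and in a more robust way. The paper supposes $E=PQ$, asserts ``for analyticity reasons'' that $P$ vanishes on $\cC$, and bounds $\deg_x P\geq 2b$, $\deg_y P\geq 2a$ by intersecting $\cC$ with the specific line $y=0$ (and similarly for the other variable); that specific-line count can degenerate for special $\phi$ --- e.g.\ for $x=\cos t$, $y=\cos(2t+\frac{\pi}{2})$ the curve meets $y=0$ in only $3$ distinct points, a double point sitting on that line --- so the paper's argument needs multiplicities or a generic line to be airtight. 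Your version repairs exactly this: you formalize the ``analyticity reasons'' as irreducibility of the Zariski closure of the image of $\CC^{*}$ under $w\mapsto\bigl(\frac{1}{2}(w^a+w^{-a}),\frac{1}{2}(e^{i\phi}w^b+e^{-i\phi}w^{-b})\bigr)$, giving an irreducible $E_0$ dividing $E$, and you count intersections with a \emph{generic} vertical line, which requires only the single bound $\deg_y E_0=\deg_y E=2a$ because the $y$-leading coefficient of $E$ is a nonzero constant. Your generic-injectivity claim is left as ``one checks,'' but it is a routine root-of-unity computation (the only non-sporadic coincidence, $w'^a=w^{-a}$ together with $w'^b=e^{-2i\phi}w^{-b}$, occurs precisely when $e^{2ia\phi}=1$), it is exactly where the hypotheses $(a,b)=1$ and $a\phi\neq k\pi$ enter, and it simultaneously proves the asserted 1-1 parametrization, which the paper's proof never addresses. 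What the paper's route buys in exchange is the explicit converse inclusion: by solving (\ref{eqE}) as a quadratic in $T_a(y)$ it shows that every solution of the equation lies on the parametrized curve, in both cases at once; you recover this only a posteriori from $E=cE_0$ in the Lissajous case, and in the Chebyshev case your ``onto the real curve is checked directly'' is a deferral --- though the paper does the same there, citing \cite{Fi,KP3}.
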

\begin{proof}
Let $(x,y) \in \cC$. We have $T_b(x)=\cos( bat), \, T_a(y) = \cos(bat + a\phi)$.
Let $\lambda = a\phi, \, \theta=abt$. We get
$T_a(y) = \cos(\theta+\lambda)$ so
$(1-\cos^2 \theta)\sin^2\lambda = (\cos\theta \cos\lambda - T_a(y))^2$, that is
$
(1-T_a(x))^2 \sin^2\lambda = (T_a(x)\cos\lambda - T_a(y))^2,
$
and we deduce our Equation (\ref{eqE}).
\pn
Conversely, suppose that $(x,y)$ satisfies (\ref{eqE}). Let $x=\cos(at)$ where $t \in \CC$. We also have
$x=\cos a(t+\frac{2k\pi}a)$. We have $T_b(x)= \cos\theta$. $A=T_a(y)$ is solution of the second-degree
equation
$$
A^2 - 2\cos(a\phi)\cos\theta A - \sin^2(a\phi)=0.
$$
Consequently, we get 
$T_a(y) = \cos(\theta \pm a\phi) = T_a(\cos(\pm bt + \phi))$. We deduce
that $y = \cos(\pm bt + \phi + \frac{2h\pi}a)$, $h \in \ZZ$.
Changing $t$ by $-t$, we can suppose that
$$
x=\cos at, \, y = \cos(bt+\phi +\frac{2h\pi}{a}).
$$
By choosing $k$ such that $kb+h\equiv 0 \Mod a$, we get
$
x=\cos at', \, y = \cos(bt'+\phi),
$
where $t'=t+\frac{2k\pi}a$.
\pn
If $a\phi \not \equiv 0 \Mod \pi$. Suppose that Equation (\ref{eqE}) factors in
$P(x,y) Q(x,y)$. We can suppose, for analyticity reasons, that $P(\cos (at) , \cos (bt+\phi))=0$, for $t\in \CC$.
The curve $\cC$ intersects the line $y=0$ in $2b$ distinct points so $\deg_x P \geq 2b$. Similarly,
$\deg_y P \geq 2a$ so that $Q$ is a constant which proves that the equation is irreducible.
\pn If $\cos a\phi = (-1)^k$, the equation becomes $T_b(x)-(-1)^k T_a(y)=0$. In this case
the curve admits the announced parametrization (see \cite{Fi,KP3} for more details).
\end{proof}
{\bf Remark.}
If $a=b=1$, we obtain the Lissajous ellipses. They are the first curves studied by Lissajous (\cite{Li}).
Let $\mu \not \equiv 0 \Mod{\pi}.$ The curve
$$\cE_{\mu}: x^2+y^2- 2 \cos(\mu)xy - \sin^2(\mu)=0$$
is an ellipse inscribed in the square $[-1,1]^2$. It admits the
parametrization $x=\cos t, \, y=\cos (t +\mu)$.
\pn
The following notation  will be useful.
Let $E_{\mu}(x,y) = x^2+y^2- 2 \cos(\mu)xy - \sin^2(\mu)$ when $\mu \not\equiv 0 \Mod{\pi}$ and
$E_0 = x-y$, $E_{\pi} = x+y$. The Equation (\ref{eqE}) is equivalent to
$E_{a\phi} (T_b(x),T_a(y))=0$.
This shows that the real part of the curve $\cC$ (Equation (\ref{eqE})) is inscribed in the square
$[-1,1]^2$.
Using Proposition \ref{equa} we recover the classical following result.
\begin{corollary}
The Lissajous curve $x=\cos(at), \, y=\cos(bt+\phi)$, ($a\phi\not \equiv 0 \Mod \pi$)
has $2ab - a - b$ singular points which are real double points.
\end{corollary}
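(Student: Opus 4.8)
The plan is to count the self-intersections directly from the trigonometric parametrization, which at the same time exhibits each of them as a genuine real double point; going through the implicit equation (\ref{eqE}) instead would force a delicate study of the points at infinity of its projective closure. Writing $u=e^{it}$ and $\omega=e^{i\phi}$, a point of $\cC$ is $\bigl(\frac12(u^a+u^{-a}),\frac12(\omega u^b+\omega^{-1}u^{-b})\bigr)$, and a double point is an unordered pair $\{t_1,t_2\}$ with $t_1\not\equiv t_2\Mod{2\pi}$ and the same image. First I would set $u_j=e^{it_j}$ and rewrite $x(t_1)=x(t_2)$, $y(t_1)=y(t_2)$ as the factored conditions $(u_1^a-u_2^a)(u_1^au_2^a-1)=0$ and $(u_1^b-u_2^b)(\omega^2u_1^bu_2^b-1)=0$. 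Selecting one factor from each splits the problem into four cases, according to whether $u_1^a=u_2^a$ or $u_1^au_2^a=1$, and whether $u_1^b=u_2^b$ or $u_1^bu_2^b=\omega^{-2}$.

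The case $u_1^a=u_2^a,\ u_1^b=u_2^b$ gives $u_1=u_2$ since $(a,b)=1$, hence no double point. The opposite combination $u_1^au_2^a=1,\ u_1^bu_2^b=\omega^{-2}$ is the decisive one: it would make $u_1u_2$ an $a$-th root of unity whose $b$-th power equals $\omega^{-2}$, and a short computation shows this is possible only when $a\phi\equiv0\Mod{\pi}$ --- so under our hypothesis this case is empty. There remain the two mixed cases. For $u_1^a=u_2^a,\ u_1^bu_2^b=\omega^{-2}$ I would put $q=u_1/u_2$, $p=u_1u_2$ and solve $q^a=1$ ($a$ values) and $p^b=\omega^{-2}$ ($b$ values), recovering $u_1$ from $u_1^2=pq$: this produces $2ab$ ordered solutions, the $2b$ of them with $q=1$ being exactly those with $u_1=u_2$, so $ab-b$ unordered pairs survive. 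The symmetric case $u_1^au_2^a=1,\ u_1^b=u_2^b$ yields $ab-a$ pairs in the same way, for a total of $(ab-a)+(ab-b)=2ab-a-b$.

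Two points then remain. For reality, I would note that in each surviving case both $u_1u_2$ and $u_1/u_2$ have modulus $1$, so $|u_1|^2=|u_1u_2|\,|u_1/u_2|=1$ and likewise $|u_2|=1$; thus $t_1,t_2$ are real and \emph{every} double point, even an a priori complex one, is real. To see that these are ordinary double points I would first check that the parametrization is an immersion: its velocity $(-a\sin at,-b\sin(bt+\phi))$ vanishes only if $\sin at=\sin(bt+\phi)=0$, which again forces $a\phi\equiv0\Mod{\pi}$ and is therefore excluded. Finally, to rule out triple points, I would attach to each preimage of a multiple point the pair of signs recording whether $at\equiv\pm\theta_0$ and $bt+\phi\equiv\pm\psi_0\Mod{2\pi}$, where $\cos\theta_0$, $\cos\psi_0$ are its coordinates; two preimages with the same sign pattern coincide, so three distinct preimages would realize three of the four patterns, and any three of them contain a pair differing in both signs --- precisely the empty case above. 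Hence each singular point is the image of exactly two real parameters, i.e.\ a real double point.

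The only real work, and the one place where the hypothesis $a\phi\not\equiv0\Mod{\pi}$ is indispensable, is the bookkeeping that keeps the four cases disjoint and non-degenerate: the vanishing of the bad diagonal, the absence of overlap between the two surviving cases (an overlap would once more force $a\phi\equiv0\Mod{\pi}$), and the exclusion of triple points. The arithmetic of the count is then immediate, and I expect the sign-pattern argument against triple points to be the subtlest step, since it is what turns the raw count of coincidence pairs into a count of honest nodes.
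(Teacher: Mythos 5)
Your proof is correct, but it takes a genuinely different route from the paper's. The paper works with the implicit equation (\ref{eqE}) furnished by Proposition \ref{equa}: a singular point must annihilate both partial derivatives, which factor as $T'_b(x)\,(T_b(x)-T_a(y)\cos a\phi)=0$ and $T'_a(y)\,(T_a(y)-T_b(x)\cos a\phi)=0$; the two ``diagonal'' combinations ($T'_b(x)=T'_a(y)=0$, or both parenthesized factors zero) are incompatible with (\ref{eqE}) when $a\phi\not\equiv 0 \Mod{\pi}$, and the two mixed combinations yield $a(b-1)$ and $b(a-1)$ real points by the root-counting properties of Chebyshev polynomials (Lemma \ref{pp}), giving $2ab-a-b$. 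You never touch the implicit equation: you count coincidence pairs of the parametrization in $u=e^{it}$, split into four cases by the factorizations $(u_1^a-u_2^a)(u_1^au_2^a-1)=0$ and $(u_1^b-u_2^b)(\omega^2u_1^bu_2^b-1)=0$, and your case analysis --- the empty ``bad diagonal,'' the disjointness of the two mixed cases, the sign-pattern exclusion of triple points, the immersion check --- mirrors on the parameter side the dichotomies the paper sees on the gradient side; your counts ($ab-b$ and $ab-a$ unordered pairs, all with $|u_1|=|u_2|=1$, hence real) are right. Each approach buys something. The paper's argument is shorter and identifies singular points of the algebraic curve directly, since gradient vanishing \emph{is} singularity; yours must implicitly use that the complex parametrization covers the whole affine curve (the ``Conversely'' half of the proof of Proposition \ref{equa}) and is its normalization, so that singular points are exactly the multiple or non-immersive parameter values --- a foundational step you should acknowledge. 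In exchange, your argument produces the parameter values $(t_1,t_2)$ of each node explicitly (the classical data cited from \cite{BHJS} in the remark following the corollary, and the analogue of Proposition \ref{dp} used later for Chebyshev curves), and it actually proves the ``double'' in ``double point'' --- exactly two preimages per singular point --- a multiplicity statement that the paper's gradient computation asserts but does not verify.
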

\begin{proof}
singular points of $\cC$ satisfy Equation (\ref{eqE}) and the system
\begin{eqnarray*}
T'_b(x) (T_b(x) - T_a(y) \cos a\phi)=0, \\
T'_a(y) (T_a(y) - T_b(x) \cos a\phi)=0.
\end{eqnarray*}
Suppose that $T'_b(x)=T'_a(y)=0$ then $T^2_a(y) = T^2_b(x)=1$ and Equation \ref{eqE} is not satisfied.
Suppose that $T_b(x) - T_a(y) \cos a\phi=T_a(y) - T_b(x) \cos a\phi=0$, then $T_b(x)=T_a(x)=0$ and
Equation \ref{eqE} is not satisfied.
We thus have either $T'_b(x)=0$ and $T_a(y) - T_b(x) \cos a\phi=0$ that gives $(b-1)\times a$ real
points because of the classical properties of Chebyshev polynomials, or
$T'_a(y)=0$ and $T_b(x) - T_a(y) \cos a\phi=0$ that gives $b\times (a-1)$ real double points.
\end{proof}
{\bf Remark.}
The study of the double points of Lissajous curves is classical (see \cite{BHJS} for  their parameters values).
The study of the double points of Chebyshev curves is simpler (see \cite{KP3}).
\begin{corollary}
The affine implicit curve $T_n(x)=T_m(y)$ has $\pent{n-1}2\pent{m-1}2 + \pent{n}2\pent{m}2$ singular points that
are real double points.
\end{corollary}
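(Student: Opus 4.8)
The plan is to read the singular locus directly off the gradient of the defining polynomial, as in the proof of the preceding corollary, but exploiting that here the two variables separate. Writing $F(x,y)=T_n(x)-T_m(y)$, a point of the curve $F=0$ is singular exactly when $F_x=T'_n(x)=0$ and $F_y=-T'_m(y)=0$. First I would apply the first item of Lemma \ref{pp}: $T'_n(x)=0$ forces $T_n(x)=\pm1$, and likewise $T'_m(y)=\pm1$. Together with the curve equation $T_n(x)=T_m(y)$ this shows that the common value is $\pm1$ and that the two signs must agree, so the singular points split into two families, those with $T_n(x)=T_m(y)=1$ and those with $T_n(x)=T_m(y)=-1$. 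Each family is a product of a set of admissible abscissae by a set of admissible ordinates, so it remains only to count, for $T_n$ and for $T_m$, the critical points lying at each of the two levels.

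For this count I would use the description of the extrema obtained in the proof of Lemma \ref{pp}: the critical points of $T_n$ are the $n-1$ points $t_k=\cos\frac{k\pi}{n}$, $k=1,\dots,n-1$, all interior to $(-1,1)$, with $T_n(t_k)=(-1)^k$ (the endpoints $\pm1$ are not critical, since $T'_n(\pm1)\ne0$). Hence the critical points with value $+1$ are those with $k$ even, $\pent{n-1}{2}$ in number, and those with value $-1$ are those with $k$ odd, $\pent{n}{2}$ in number. Pairing level $+1$ with level $+1$ and level $-1$ with level $-1$, and summing the two products, gives exactly $\pent{n-1}{2}\pent{m-1}{2}+\pent{n}{2}\pent{m}{2}$ points, as claimed.

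Finally I would check that each such point is a genuine real double point. Since $F_{xy}=0$, $F_{xx}=T''_n(x_0)$ and $F_{yy}=-T''_m(y_0)$, the Hessian determinant there equals $-T''_n(x_0)T''_m(y_0)$. At a point of value $+1$ both $x_0$ and $y_0$ are local maxima, so $T''_n(x_0)<0$ and $T''_m(y_0)<0$; at a point of value $-1$ both are local minima, so both second derivatives are positive. In either admissible case the product $T''_n(x_0)T''_m(y_0)$ is positive, the Hessian determinant is strictly negative, and the singularity is a nondegenerate saddle, i.e. an ordinary node with two transverse real branches --- a real double point. The needed nondegeneracy $T''_n(t_k)\ne0$ holds because $t_k$ is a simple root of $T'_n=nV_n$. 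I expect the enumeration to be routine; the one step that genuinely needs care is this last one, namely ruling out more degenerate singularities and confirming that the branches are real, and its key input is precisely that at a common extremal value the two curvatures $T''_n(x_0)$ and $T''_m(y_0)$ share the same sign, forcing the indefinite local model.
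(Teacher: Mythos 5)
Your proof is correct, and its counting core is the same as the paper's: reduce the singular locus to the two level sets $T_n(x)=T_m(y)=1$ and $T_n(x)=T_m(y)=-1$, then count using the extremal structure of Chebyshev polynomials (Lemma \ref{pp}). But your execution adds two things the paper's two-line proof omits, and both are genuine content. First, the relevant count is of \emph{critical} preimages, not of all preimages: the level sets $T_n(x)=\pm1$ also contain the endpoint solutions $x=\pm1$, where $T'_n\neq 0$, and those points of the curve are smooth. Your count via $T_n(t_k)=(-1)^k$, $t_k=\cos\frac{k\pi}{n}$, $k=1,\dots,n-1$, gives the correct attribution: $\pent{n-1}2$ interior maxima at level $+1$ and $\pent{n}2$ interior minima at level $-1$. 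Note that the counts stated in Lemma \ref{pp} are precisely these interior-extremum counts with the two levels interchanged (and each is one less than the number of all real solutions), so quoting the lemma literally pairs the numbers with the wrong levels; the corollary's total is unaffected only because the sum is symmetric under that interchange, and your direct count silently repairs this. Second, the paper asserts but never verifies that the singular points are \emph{real double points}; your Hessian argument --- $F_{xy}=0$, while $T''_n(x_0)$ and $T''_m(y_0)$ share the same sign at a common level, so the Hessian determinant $-T''_n(x_0)\,T''_m(y_0)$ is strictly negative, with nondegeneracy coming from the simplicity of the roots of $V_n$ --- supplies exactly this missing step, showing each singularity is an ordinary node with two real branches. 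The only slip is typographical: ``likewise $T'_m(y)=\pm1$'' should read $T_m(y)=\pm1$.
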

\begin{proof}
The singular points satisfy either $T_n(x)=T_m(y)=1$ or $T_n(x)=T_m(y)=-1$ and we conclude using Lemma \ref{pp}.
\end{proof}
\begin{theo}{\bf Factorization of $\mathbf{T_n(x)-T_m(y)}$.}\label{fact}\\%
Let $m=ad$, $n=bd$, $(a,b)=1$ and $a$ odd. We have the factorization
$$
T_n(x)-T_m(y) = 2^{d-1} \Prod_{k=0}^{\pent d2} C_k(x,y)
$$
where
$$
C_k(x,y)=E_{\frac{2ak\pi}d}(T_b(x),T_a(y))
$$
is the irreducible equation of the curve
$\cC_k : x=\cos (at), \, y = \cos (bt+\frac{2k\pi}{d})$, given in Proposition
\ref{equa}.
\end{theo}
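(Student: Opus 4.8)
The plan is to reduce this two–variable factorization to the one–variable identity for $T_d(u)-T_d(v)$ and then substitute $u=T_b(x)$, $v=T_a(y)$. The starting point is the composition law $T_n\circ T_m=T_{nm}$ recalled above: since $n=bd$ and $m=ad$ we have $T_n(x)=T_d(T_b(x))$ and $T_m(y)=T_d(T_a(y))$, so with $u=T_b(x)$, $v=T_a(y)$,
$$ T_n(x)-T_m(y)=T_d(u)-T_d(v). $$
Thus it suffices to factor $G(u,v):=T_d(u)-T_d(v)$ into quadrics of the form $E_{2k\pi/d}(u,v)$ and then read off what the substitution does to each factor.

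First I would factor $G$. For each $k$ the conic $\cE_{2k\pi/d}$ is parametrized by $u=\cos s$, $v=\cos(s+2k\pi/d)$, and on it $T_d(u)=\cos(ds)$ while $T_d(v)=\cos(ds+2k\pi)=\cos(ds)$; hence $\cE_{2k\pi/d}\subset\{G=0\}$, and since $E_{2k\pi/d}$ is irreducible (Proposition \ref{equa}) it divides $G$. The same computation shows $E_0=u-v$ divides $G$, and, when $d$ is even, $E_\pi=u+v$ divides $G$ (using $T_d(-v)=(-1)^dT_d(v)$). These factors are pairwise coprime because the numbers $\cos(2k\pi/d)$, $k=0,\dots,\pent d2$, are pairwise distinct, the angles lying in $[0,\pi]$. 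Their product therefore divides $G$; comparing degrees in $u$ (each quadratic factor contributes $2$, each linear factor $1$, totalling $d=\deg_u G$) shows the quotient is a constant, and matching the leading coefficient $2^{d-1}$ of $T_d(u)$ gives
$$ T_d(u)-T_d(v)=2^{d-1}\prod_{k=0}^{\pent d2} E_{2k\pi/d}(u,v). $$

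Substituting $u=T_b(x)$, $v=T_a(y)$ turns each $E_{2k\pi/d}(u,v)$ into $E_{2k\pi/d}(T_b(x),T_a(y))$, which by Proposition \ref{equa} (applied with $\phi=2k\pi/d$, so that $a\phi=2ak\pi/d$) is exactly the equation $C_k$ of the curve $\cC_k:x=\cos(at),\,y=\cos(bt+2k\pi/d)$. The one point requiring care is the index: the factor attached to $\cC_k$ carries the angle $a\phi=2ak\pi/d$, not $2k\pi/d$, so I must check that as $k$ runs over $0,\dots,\pent d2$ the quadrics $E_{2ak\pi/d}$ are precisely the quadrics $E_{2k\pi/d}$, each once. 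This uses that $a$ is prime to $d$ (so $k\mapsto ak\bmod d$ is a bijection) together with $E_\mu=E_{-\mu}$ (only the class of $ak$ modulo $\pm1$ matters, which matches the range $0,\dots,\pent d2$); the hypothesis that $a$ is odd guarantees that, when $d$ is even, the value $k=d/2$ yields $a\phi=a\pi\equiv\pi$ and hence the linear form $E_\pi=T_b(x)+T_a(y)$, so that the two degenerate factors $E_0$ and $E_\pi$ do not collide.

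I expect the main obstacle to be exactly this index and degeneracy bookkeeping rather than the algebra: one must keep track of which of the $\pent d2+1$ factors degenerate to the linear forms $E_0,E_\pi$, confirm that after substitution the degrees still read $\deg_x=bd=n$ and $\deg_y=ad=m$ and that the constant $2^{d-1}$ survives, and verify that every factor occurs with multiplicity exactly one. Equivalently — and this is the route I would write up if the relabelling became awkward — one can argue directly on $T_n(x)-T_m(y)$: each $\cC_k$ lies on the curve and carries the irreducible equation $C_k$, the $C_k$ are pairwise distinct, so $\Prod_{k=0}^{\pent d2} C_k$ divides $T_n(x)-T_m(y)$, and the comparison of $\deg_x$ (and of the leading coefficient in $x$, which equals $2^{(b-1)d}$ for the product against $2^{bd-1}$ for $T_n(x)-T_m(y)$) forces equality with the stated constant $2^{d-1}$.
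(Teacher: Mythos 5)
Your main argument takes a genuinely different route from the paper's. The paper's proof is, almost word for word, the fallback you sketch in your last paragraph: it observes $\cC_k \subset \cC$, \emph{asserts} that the $\cC_k$ are pairwise distinct, notes that $C_0$ (and $C_{d/2}$ for $d$ even) are the linear factors while the remaining $C_k$ have dominant term $2^{2b-2}x^{2b}$, and compares dominant coefficients ($2^{(b-1)d}$ against $2^{n-1}$) to pin down the constant $2^{d-1}$. Your primary route --- compose $T_n = T_d\circ T_b$, $T_m = T_d\circ T_a$, factor $T_d(u)-T_d(v)$ into the conics $E_{2k\pi/d}(u,v)$, then substitute --- is correct and arguably cleaner: in one variable the pairwise coprimality of the factors is transparent (the values $\cos\frac{2k\pi}{d}$, $0\leq k \leq \pent d2$, are distinct), and it produces the paper's later formula (\ref{fell}) as a byproduct rather than as a corollary of the theorem.

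Your relabelling step, however, touches a genuine problem --- in the statement, not in your argument. The coprimality $(a,d)=1$ you invoke is \emph{not} implied by the stated hypotheses: $a=3$, $b=2$, $d=3$ satisfies $(a,b)=1$ with $a$ odd. In that case the theorem is false as written: for $k=1$ one has $a\phi = 2\pi$, so $C_1 = C_0 = T_2(x)-T_3(y)$ (and indeed $\cC_1=\cC_0$ as curves, via $t \mapsto t + \frac{4\pi}{3}$), hence $2^{2}C_0C_1$ has degree $4$ in $x$ while $T_6(x)-T_9(y)$ has degree $6$. What your substitution argument actually proves is the correct general identity
$$
T_n(x)-T_m(y) \;=\; 2^{d-1}\prod_{k=0}^{\pent d2} E_{\frac{2k\pi}{d}}\bigl(T_b(x),T_a(y)\bigr),
$$
whose $k$-th factor is the equation of the curve $x=\cos(at)$, $y=\cos\bigl(bt+\frac{2k\pi}{m}\bigr)$; in the example above the factor the theorem misses is $E_{2\pi/3}(T_2(x),T_3(y))$, the Lissajous curve $x=\cos 3t$, $y=\cos(2t+\frac{2\pi}{9})$. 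Re-indexing these factors as $E_{2ak\pi/d}$, as the theorem does, is legitimate exactly when $(a,d)=1$ --- your bijection argument. The paper's own proof conceals the same gap inside the unjustified assertion $\cC_k \neq \cC_{k'}$, which fails when $(a,d)>1$. So either the hypothesis $(a,d)=1$ should be added, in which case both proofs are complete and yours is the more transparent, or the factors should be re-indexed as above, which only your route establishes. (Every use made of the theorem in the paper, in particular formula (\ref{fell}) where $a=b=1$, has $(a,d)=1$, so nothing downstream is affected.)
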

\begin{proof}
Let $\cC$ be the curve $T_n(x)=T_m(y)$. We easily get $\cC_k \subset \cC$ and
$\cC_k \not = \cC_{k'}$.
When $k=0$, $\cC_0$ admits the equation $T_b(x)-T_a(x)=0$.
If $2k=d$, $\cC_k$ admits the equation $T_b(x)+T_a(y)=0$.
In the other cases, the dominant term in $x$ of $C_k$ is $2^{2b-2}x^{2b}$.
If $d$ is even, we deduce that the dominant term of
$\Prod_{k=0}^{\pent d2} C_k(x,y)$ is $2^{(b-1)d}x^{2n}$ and we get our result in this case.
If $d$ is odd, we get the same result.
\end{proof}
\begin{corollary}
Let $d=\gcd(a,b)$. The curve $T_b(x)=T_a(y)$ has $\pent d2 +1$ components.
$\pent{d-1}2$ of them are Lissajous curves.
\end{corollary}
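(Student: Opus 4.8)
The plan is to derive the corollary from Theorem~\ref{fact} purely by counting, after putting the curve into the form the theorem requires. Write $a=\alpha d$ and $b=\beta d$ with $d=\gcd(a,b)$ and $(\alpha,\beta)=1$; since $\alpha$ and $\beta$ are coprime, not both are even. If $\alpha$ is odd I would apply Theorem~\ref{fact} directly to $T_b(x)-T_a(y)$, with $\alpha$ in the role of the theorem's odd index. If instead $\beta$ is odd I would first interchange $x$ and $y$, replacing the curve by $T_a(x)=T_b(y)$, whose reduced $y$-exponent $\beta$ is now odd. Because $(x,y)\mapsto(y,x)$ is an isomorphism of the plane that sends irreducible curves to irreducible curves and carries a Lissajous equation $E_\mu(T_\beta,T_\alpha)$ to one of the same shape, it preserves both the number of components and which of them are Lissajous; so assuming the reduced $y$-exponent odd costs nothing.

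First, then, I would quote the factorization
$$
T_b(x)-T_a(y)=2^{d-1}\Prod_{k=0}^{\pent d2} C_k(x,y),
$$
in which Theorem~\ref{fact} guarantees that the factors $C_k$ are irreducible and pairwise distinct as curves. The irreducible components of $T_b(x)=T_a(y)$ are therefore in bijection with the indices $k=0,1,\dots,\pent d2$, of which there are $\pent d2+1$; this is the first assertion.

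Next I would sort these components by type using Proposition~\ref{equa}. The factor $C_k=E_{\frac{2\alpha k\pi}{d}}(T_\beta(x),T_\alpha(y))$ is a Lissajous curve exactly when $\frac{2\alpha k\pi}{d}\not\equiv 0\Mod{\pi}$, and is otherwise the Chebyshev curve $T_\beta(x)\mp T_\alpha(y)$. Among $k=0,1,\dots,\pent d2$ the Chebyshev values are $k=0$, which always gives $E_0=T_\beta(x)-T_\alpha(y)$, and, when $d$ is even, $k=d/2$, which gives $E_\pi=T_\beta(x)+T_\alpha(y)$ (this last uses that $\alpha$ is odd, so that $\alpha\pi\equiv\pi\Mod{2\pi}$ and the sign is $+$). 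All remaining $C_k$ are Lissajous. Subtracting, the number of Lissajous components is $\pent d2+1-1=\pent d2=\pent{d-1}2$ when $d$ is odd, and $\pent d2+1-2=d/2-1=\pent{d-1}2$ when $d$ is even; in both cases it equals $\pent{d-1}2$, which is the second assertion.

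The only point needing care, and the one I would lean on the theorem for rather than reprove, is that no intermediate index $0<k<\pent d2$ yields a degenerate factor, i.e.\ that $\frac{2\alpha k\pi}{d}\not\equiv 0\Mod{\pi}$ there. This is precisely where $\gcd(\alpha,d)$ intervenes, and it is exactly the content of the theorem's computation that each such $C_k$ has leading term $2^{2\beta-2}x^{2\beta}$ and is thus a genuine Lissajous component. Granting that, the corollary is just the bookkeeping of the two floor functions above.
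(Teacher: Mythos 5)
Your overall route is the paper's own: the paper gives this corollary no proof at all, presenting it as immediate from Theorem~\ref{fact}, and your reduction-plus-counting is exactly the bookkeeping the authors leave implicit. Your $x\leftrightarrow y$ swap to secure the theorem's oddness hypothesis is a genuinely needed step that the paper glosses over, and your floor-function arithmetic in the two parity cases of $d$ is correct. Whenever the reduced exponent you place in the theorem's ``odd'' slot is also coprime to $d$, your argument is complete.

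However, the one point you explicitly chose to ``lean on the theorem for'' is precisely where the argument breaks, because Theorem~\ref{fact} does not deliver it: nothing in your reduction forces $\gcd(\alpha,d)=1$, and when $\gcd(\alpha,d)>1$ the angles $\frac{2\alpha k\pi}{d}$ really do degenerate at intermediate indices --- indeed the theorem's stated factorization is then false, not merely delicate. Concretely, take $a=9$, $b=3$, so $d=3$, $\alpha=3$ (odd, so your Case 1 applies) and $\beta=1$: the theorem would give $T_3(x)-T_9(y)=4\,E_0(x,T_3(y))\,E_{2\pi}(x,T_3(y))$, whose second angle is $2\pi\equiv 0 \Mod{\pi}$, so the product collapses to $4(x-T_3(y))^2$, which is not $T_3(x)-T_9(y)$; and your sorting criterion would report zero Lissajous components, whereas the truth is $T_3(x)-T_9(y)=4\,(x-T_3(y))\,E_{2\pi/3}(x,T_3(y))$: two components, one of them Lissajous, as the corollary asserts. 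Nor can any swap rescue the general case: for $\alpha=3$, $\beta=5$, $d=15$ (i.e.\ $a=45$, $b=75$) both reduced exponents are odd but neither is coprime to $d$. The repair is to abandon the theorem's indexing and factor by substitution into Formula~(\ref{fell}): since $T_b(x)-T_a(y)=T_d(T_\beta(x))-T_d(T_\alpha(y))$, one gets $T_b(x)-T_a(y)=2^{d-1}\Prod_{k=0}^{\pent d2}E_{\frac{2k\pi}{d}}(T_\beta(x),T_\alpha(y))$, whose angles $\frac{2k\pi}{d}$ never degenerate for $0<2k<d$. With that indexing, Proposition~\ref{equa} (applied after your swap when $\alpha$ is even) gives irreducibility and the Lissajous/Chebyshev sorting, and your counting then goes through verbatim. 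So the gap is real but entirely reparable --- and, in fairness, it is one you inherited from the paper's own statement of Theorem~\ref{fact}, whose proof asserts $\cC_k\neq\cC_{k'}$ without the hypothesis that makes this true.
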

\begin{figure}[th]
\begin{tabular}{cc}
\epsfig{file=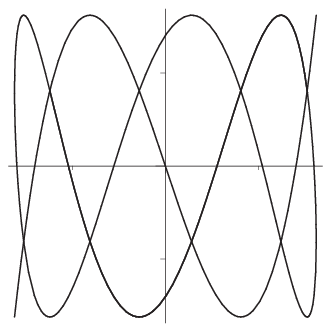,height=3.5cm,width=3.5cm}&
\epsfig{file=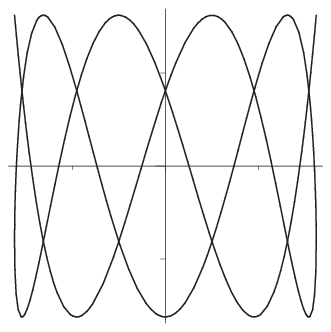,height=3.5cm,width=3.5cm}\\
{\small $T_{9}(x)=T_3(y)$}&{\small $T_{10}(x)=T_3(y)$}\\
\epsfig{file=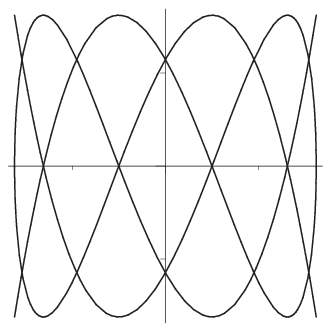,height=3.5cm,width=3.5cm}&
\epsfig{file=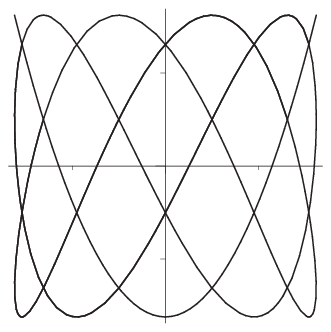,height=3.5cm,width=3.5cm}\\
{\small $T_{10}(x)=T_4(y)$}&{\small $T_{10}(x)=T_5(y)$}
\end{tabular}
\caption{Implicit Chebyshev curves}
\label{Td}
\end{figure}
Theorem \ref{fact} is particularly  interesting when $m=n=d$ and $a=b=1$.
In this case the curve $T_n(x)=T_n(y)$ is a union of ellipses and some lines.
It will be useful for the determination of the double points of Chebyshev space curves.
We have
\[
\Frac{T_n(t)-T_n(s)}{t-s} = 2^{n-1} \Prod_{k=1}^{\pent{n}{2}} E_{\frac{2k\pi}n}(s,t).\label{fell}
\]
The curve  $\Frac{T_n(t)-T_n(s)}{t-s}=0$ has $\pent n2$ irreducible components.
Note that $\mathcal{E}_{\frac{2k\pi}n}$ and $\mathcal{E}_{\frac{2l\pi}m}$ intersect at the point
$(t,s) = (\cos(\frac{k\pi}{n}+\frac{l\pi}{m}),\cos(\frac{k\pi}{n}-\frac{l\pi}{m}))$ and its symmetric
with respect to the lines $s=-t$ and $s=t$.
We recover the parametrization of the double points of $x=T_a(t), \, y=T_b(t)$ that will be
very useful for the description of Chebyshev space curves.
\begin{proposition}[\cite{KP3,KPR}]\label{dp}
Let $a$ and $b$ are nonnegative coprime integers, a being odd. Let
the Chebyshev curve $\cC$ be defined by  $ x= T_a(t), \  y=T_b(t).$
The pairs $(t,s)$ giving a crossing point are
$$t=\cos (\frac{j\pi}b+\frac{i\pi}a), \ s=\cos(\frac{j\pi}b-\frac{i\pi}a)$$
where $1\leq i \leq \frac 12 (a-1)$, $1\leq j \leq b-1$.
\end{proposition}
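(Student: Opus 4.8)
The plan is to read the crossings off the ellipse factorization (\ref{fell}) and the intersection formula for the $\cE_\mu$ recalled just above, and then to close the argument by a count of double points using the two corollaries around Theorem \ref{fact}. A crossing of $\cC$ is an unordered pair $\{t,s\}$ with $t\neq s$, $T_a(t)=T_a(s)$ and $T_b(t)=T_b(s)$; equivalently $(s,t)$ is a common zero of $P_a$ and $P_b$. Since $a$ is odd, (\ref{fell}) factors $P_a$ into the $\frac12(a-1)$ ellipses $\cE_{2i\pi/a}$ with $1\le i\le\frac12(a-1)$, and factors $P_b$ into the $\pent b2$ curves $\cE_{2k\pi/b}$. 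Thus every crossing is a common point of some $\cE_{2i\pi/a}$ and some $\cE_{2k\pi/b}$, and I would locate all of these by the stated intersection formula.

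Applying that formula, $\cE_{2i\pi/a}$ and $\cE_{2k\pi/b}$ meet at $(t,s)=(\cos(\frac{i\pi}a+\frac{k\pi}b),\cos(\frac{i\pi}a-\frac{k\pi}b))$ together with its reflections across $s=t$ and $s=-t$. Because $P_a$ and $P_b$ are symmetric in $s$ and $t$, the reflection across $s=t$ merely exchanges the two parameters and hence names the same crossing; the reflection across $s=-t$ sends $(t,s)$ to $(-s,-t)$, which, using $-\cos\theta=\cos(\pi-\theta)$, I would rewrite as the point with indices $(i,b-k)$. Therefore, as $k$ ranges over the components of $P_b$ and both reflections are recorded, the second index sweeps out $j=1,\ldots,b-1$ while $i$ stays in $1,\ldots,\frac12(a-1)$; this is exactly the announced family $t=\cos(\frac{j\pi}b+\frac{i\pi}a)$, $s=\cos(\frac{j\pi}b-\frac{i\pi}a)$. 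When $b$ is even the factor of $P_b$ at $k=b/2$ degenerates to the line $s+t=0$ (the term $E_\pi$); I would treat this case separately and check that its intersections with $\cE_{2i\pi/a}$ furnish precisely the pairs with $j=b/2$, for which $s=-t$ and $T_a(t)=0$.

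That each listed pair really is a crossing I would verify directly from $T_n(\cos\theta)=\cos(n\theta)$: setting $\alpha=\frac{j\pi}b+\frac{i\pi}a$ and $\beta=\frac{j\pi}b-\frac{i\pi}a$, one finds $T_a(\cos\alpha)=(-1)^i\cos\frac{aj\pi}b=T_a(\cos\beta)$ and $T_b(\cos\alpha)=(-1)^j\cos\frac{bi\pi}a=T_b(\cos\beta)$, while $\cos\alpha=\cos\beta$ would force $\alpha\equiv\pm\beta\Mod{2\pi}$, i.e. $a\mid i$ or $b\mid j$, both excluded by the ranges, so $t\neq s$. To see the list is exhaustive I would count it against the geometry: it contains $\frac12(a-1)(b-1)$ pairs, and since $\gcd(a,b)=1$ the curve $\cC$ is the unique component of $T_b(x)=T_a(y)$ (the corollary following Theorem \ref{fact} with $d=1$), whose number of double points is, by the corollary preceding Theorem \ref{fact}, $\pent{b-1}2\pent{a-1}2+\pent b2\pent a2$, and this equals $\frac12(a-1)(b-1)$ for both parities of $b$.

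The part I expect to be the real work is not any single computation but the injectivity bookkeeping that turns this count into a proof: I must show that distinct index pairs $(i,j)$ in the stated ranges give distinct unordered pairs $\{t,s\}$, so that the $\frac12(a-1)(b-1)$ listed crossings are pairwise different and therefore account for every crossing. This requires verifying that the map $k\mapsto b-k$ exactly matches the two preimages coming from a single ellipse--ellipse intersection, that the coprimality of $a$ and $b$ rules out any further coincidence $\cos\alpha=\cos\alpha'$, $\cos\beta=\cos\beta'$ among the listed angles (the congruences $\alpha'\equiv\pm\alpha$, $\beta'\equiv\pm\beta\Mod{2\pi}$ then force $i'=i$, $j'=j$), and that the even-$b$ line component is folded consistently into this accounting.
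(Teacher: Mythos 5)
Your proof is correct and takes essentially the same route as the paper, which justifies this proposition (quoted from \cite{KP3,KPR}) precisely by the factorization (\ref{fell}) of $P_n$ into the ellipses $E_{\frac{2k\pi}{n}}$ together with the remark that two such ellipses intersect at $(\cos(\frac{k\pi}{n}+\frac{l\pi}{m}),\cos(\frac{k\pi}{n}-\frac{l\pi}{m}))$ and its reflections in the lines $s=\pm t$. Your additional steps --- the direct verification via $T_n(\cos\theta)=\cos n\theta$, the separate treatment of the degenerate line factor $E_{\pi}$ when $b$ is even, and the exhaustiveness count $\pent{a-1}2\pent{b-1}2+\pent{a}2\pent{b}2=\frac 12 (a-1)(b-1)$ combined with the coprimality argument for injectivity --- simply fill in the details that the paper leaves to the cited references.
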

\begin{figure}[th]
\begin{tabular}{ccc}
\epsfig{file=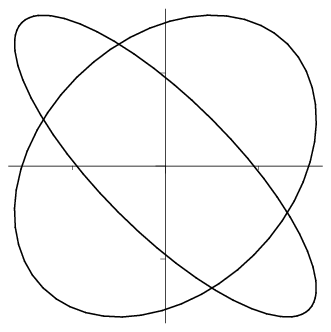,height=3.5cm,width=3.5cm}&&
\epsfig{file=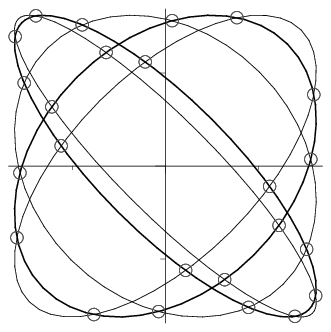,height=3.5cm,width=3.5cm}\\
{\small $\Frac{T_5(t)-T_5(s)}{t-s}=0$}&&
{\small
$\left \{
\begin{array}{c}
\Frac{T_7(t)-T_7(s)}{t-s}=0\\
\Frac{T_5(t)-T_5(s)}{t-s}=0
\end{array}\right .$}
\end{tabular}
\caption{Double points in the parameters space}
\label{T7}
\vspace{-10pt}
\end{figure}
\section{Critical values}\label{cv}
A polynomial $R_{a,b,c} \in \ZZ[\phi]$ for which
$\cZ_{a,b,c} = Z(R)$ can be defined by
$\langle R \rangle = \langle P_a,P_b, Q_c \rangle \bigcap \QQ[\phi]$ and may be obtained
with Gröbner bases (\cite{CLOS}).
\pn
{\bf Example.}
When $a=3$, $b=4$, $c=5$, we find that \\
$R_{a,b,c} =
 \left( 80\,{\phi}^{4}+60\,{\phi}^{2}-1 \right) \cdot $\\
 \hbox{}\hfill
$\left( 6400\,{\phi}^{8}-3200\,{\phi}^{6}+560\,{\phi}^{4}-80\,{\phi}^{2}+1 \right)$.\\
There are exactly 6 critical values that are symmetrical about the origin.
\begin{figure}[th]
\begin{tabular}{ccc}
\epsfig{file=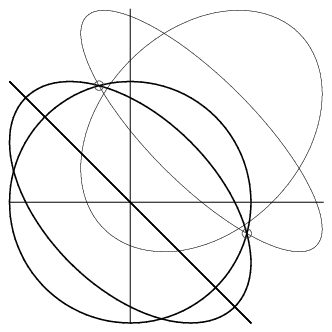,height=3.5cm,width=3.5cm}&&
\epsfig{file=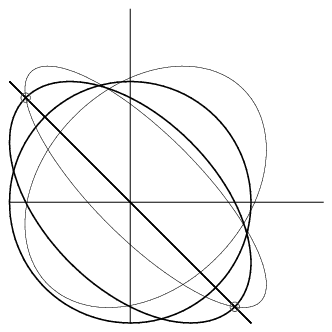,height=3.5cm,width=3.5cm}\\
{\small $\phi=.590$}&&{\small $\phi=.128$}\\
\epsfig{file=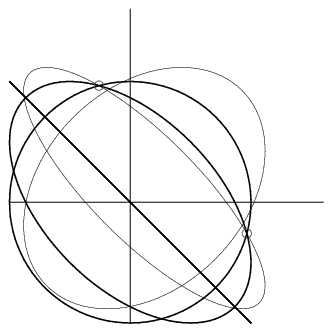,height=3.5cm,width=3.5cm}&&
\epsfig{file=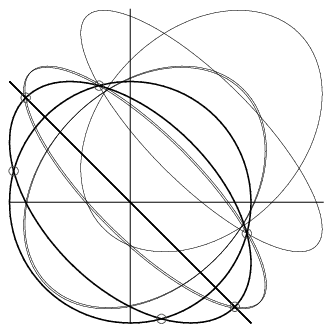,height=3.5cm,width=3.5cm}\\
{\small $\phi=.117$}&&{\small $\phi=.117,.128,.590$}\\
\end{tabular}
\caption{$P_3=0, P_4=0, Q_5=0$}
\label{T345}
\end{figure}
For these values of $\phi$, the curve $Q_5(s,t,\phi)=0$, which is translated from
the curve $P_5(s,t)=0$ by the vector $(\phi,\phi)$, meets the points
$\{P_3=0, P_4=0\}$.
\pn
In this part, we use use the properties of Chebyshev curves obtained in section \ref{curves}.
We give an explicit formula for the polynomial $R_{a,b,c}$ as a product of
of univariate polynomials of degree 1 or 2 with coefficients in $\QQ(\cos\frac\pi{a},\cos\frac\pi{b},\cos\frac\pi{c})$.
\begin{proposition}
Let $a,b$ be nonnegative coprime integers and $c$ be an integer. Suppose that
$a$ is odd.
Let $R_{a,b,c}(\phi)$ be the polynomial
\begin{equation}
\Prod_{i=1}^{\frac{a-1}2}
\Prod_{j=1}^{b-1}
Q_c(\cos (\frac jb+\frac ia)\pi, \cos(\frac jb-\frac ia)\pi, \phi).
\end{equation}
$R_{a,b,c} \in \ZZ[\phi]$ and
$\cC(a,b,c,\phi)$ is singular iff $R_{a,b,c}(\phi)=0$.
\end{proposition}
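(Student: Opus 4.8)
The plan is to read off both assertions from the description of the double points of the plane projection, using Proposition \ref{dp} and the symmetry of $Q_c$. Recall from the introduction that $\cC(a,b,c,\phi)$ is singular precisely when the system $\{P_a(s,t)=P_b(s,t)=Q_c(s,t,\phi)=0\}$ has a solution, and that a crossing of the projection $\cC(a,b)$ corresponds to distinct parameters $s\neq t$ with $T_a(s)=T_a(t)$ and $T_b(s)=T_b(t)$, i.e. to a solution of $P_a=P_b=0$ off the diagonal. First I would pin down the zero set $\cV=\{P_a=P_b=0\}$ exactly. Using the factorization (\ref{fell}), $P_a$ and $P_b$ are products of $\pent a2$ and $\pent b2$ conics; since $(a,b)=1$, no two of these conics share a component or a point at infinity (both would force $\cos\frac{k\pi}a=\cos\frac{l\pi}b$ with $1\le k,l$ too small), so by B\'ezout $\cV$ has at most $(a-1)(b-1)$ points. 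Proposition \ref{dp} exhibits exactly $\frac{a-1}2(b-1)$ crossing points, each contributing the two ordered solutions $(s,t)$ and $(t,s)$, which already total $(a-1)(b-1)$; as none lies on the diagonal (a common zero of $V_a,V_b$ is again impossible when $(a,b)=1$), the bound is attained. Hence $\cV$ is reduced, entirely real, and equals the set of crossing-point parameter pairs (both orderings) of Proposition \ref{dp}.

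The singularity criterion then follows at once. The polynomial $Q_c$ is symmetric in $s$ and $t$, since swapping them negates both numerator and denominator of $Q_c$; thus its value at a crossing point is independent of which of the two ordered representatives is used. Consequently $\cC(a,b,c,\phi)$ is singular iff $Q_c$ vanishes at one of the crossing points of Proposition \ref{dp}, i.e. iff one factor of $R_{a,b,c}(\phi)=\Prod_{i,j}Q_c(\cos(\frac jb+\frac ia)\pi,\cos(\frac jb-\frac ia)\pi,\phi)$ vanishes, i.e. iff $R_{a,b,c}(\phi)=0$.

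For $R_{a,b,c}\in\QQ[\phi]$ I would argue by Galois invariance. All the numbers $\cos(\frac jb\pm\frac ia)\pi$ lie in $\QQ(\zeta)$ with $\zeta=e^{i\pi/(ab)}$, and $\mathrm{Gal}(\QQ(\zeta)/\QQ)$ acts on the finite $\QQ$-defined set $\cV$. This action commutes with the swap involution $\iota:(s,t)\mapsto(t,s)$, which is defined over $\QQ$, so it permutes the $\iota$-orbits, that is, the crossing points. Because $P_a,P_b,Q_c$ have integer coefficients, any $\sigma$ in the Galois group satisfies $\sigma\bigl(Q_c(s_0,t_0,\phi)\bigr)=Q_c(\sigma s_0,\sigma t_0,\phi)$, and the right-hand factor is the one attached to the crossing point $\sigma\cdot p$. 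Since $R_{a,b,c}$ is the product of $Q_c$ over all crossing points, with one well-defined factor each by the symmetry above, $\sigma$ merely permutes these factors and fixes $R_{a,b,c}$. Hence every coefficient of $R_{a,b,c}$ is Galois-invariant, so $R_{a,b,c}\in\QQ[\phi]$.

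It remains to upgrade rationality to integrality, and I expect this to be the main obstacle. Each coefficient of $R_{a,b,c}$ in $\phi$ is a symmetric function, with integer coefficients, of the coordinates of the points of $\cV$, hence an integer polynomial in the coefficients of $P_a$ and $P_b$. The delicate point is controlling the powers of $2$ coming from the leading coefficients $2^{a-1},2^{b-1},2^{c-1}$ of the Chebyshev polynomials: a direct expansion only gives $R_{a,b,c}\in\ZZ[\tfrac12][\phi]$. To close this I would use that $2\cos\frac{k\pi}n$ is an algebraic integer, so that each coefficient of $R_{a,b,c}$ is simultaneously rational (by the previous paragraph) and an algebraic integer once the $2$-denominators are shown to cancel in the symmetric combination; alternatively, one identifies $R_{a,b,c}$ up to a nonzero constant with a primitive generator of the elimination ideal $\langle P_a,P_b,Q_c\rangle\cap\QQ[\phi]$, which is already known to lie in $\ZZ[\phi]$. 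Either way the conclusion $R_{a,b,c}\in\ZZ[\phi]$ follows, while the singularity criterion and the rationality are immediate consequences of Proposition \ref{dp} and the symmetry of $Q_c$.
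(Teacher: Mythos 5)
Your singularity criterion and your rationality argument are both correct, but note that only the first coincides with the paper's route. The criterion is handled exactly as in the paper: Proposition \ref{dp} identifies the solutions of $P_a=P_b=0$ with the parameter pairs of the crossing points, and the symmetry of $Q_c$ in $(s,t)$ makes the product well defined, so singularity is equivalent to the vanishing of one factor (your extra B\'ezout verification is harmless but unnecessary). For rationality, however, you argue by Galois invariance of the set of $\iota$-orbits of $\cV$, while the paper never invokes Galois theory at all: it runs a symmetric-function argument which delivers rationality and integrality in one stroke. Your Galois argument is valid and clean, but it only yields $R_{a,b,c}\in\QQ[\phi]$.

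The genuine gap is the integrality $R_{a,b,c}\in\ZZ[\phi]$, which you flag as ``the main obstacle'' and then do not close. Route (a) is circular: $\cos\frac{k\pi}{n}$ is in general \emph{not} an algebraic integer (e.g.\ $\cos\frac{\pi}{3}=\frac12$), so ``rational and an algebraic integer'' only gives $R_{a,b,c}\in\ZZ[\frac12][\phi]$, and the clause ``once the 2-denominators are shown to cancel'' is exactly the statement to be proved. Route (b) fails twice over: a generator of $\langle P_a,P_b,Q_c\rangle\cap\QQ[\phi]$ is defined only up to a nonzero rational constant, and a rational multiple of a primitive integer polynomial need not be integral, so an identification ``up to a nonzero constant'' cannot yield integrality; moreover that identification is itself unjustified, since the product $R_{a,b,c}$ has roots with multiplicities (for instance $\phi=0$ occurs with multiplicity $\frac{a-1}{2}((b,c)-1)+\pent{b}{2}((a,c)-1)$ by Section \ref{multiple}) which the eliminant has no reason to reproduce. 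The missing idea is the paper's two-stage symmetrization, which never separates $s$ from $t$: since $Q_c\in\ZZ[\phi][s,t]$ is symmetric and $s+t=2\cos\alpha_i\cos\beta_j$, $st=\cos^2\alpha_i+\cos^2\beta_j-1$, each factor is an integer polynomial evaluated at $(\cos\alpha_i,\cos\beta_j,\phi)$, with no denominators introduced; then $R_i=\prod_{j=1}^{b-1}Q_c(\cdot,\cdot,\phi)$ is symmetric in the roots $\cos\beta_j$ of $V_b\in\ZZ[t]$, which places $R_i$ in $\ZZ[\phi,\cos\alpha_i]$; finally the identity $Q_c(-s,-t,\phi)=Q_c(s,t,-\phi)$ completes the half-range $1\le i\le\frac{a-1}{2}$ to the full set of roots of $V_a$, giving $\prod_{i=1}^{a-1}R_i(\phi)=R_{a,b,c}(\phi)R_{a,b,c}(-\phi)\in\ZZ[\phi]$, from which the paper concludes. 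Your coordinatewise treatment of $s$ and $t$ cannot see this 2-adic cancellation, so as written the claim $R_{a,b,c}\in\ZZ[\phi]$ remains unproved.
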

\begin{proof}
$\phi \in \cZ_{a,b,c}$ iff there exists $(s,t)$ such that
$P_a(s,t)=P_b(s,t)=0$ and $Q_c(s,t,\phi)=0$. This conditions are equivalent to
have $t=\cos (\frac{j\pi}b+\frac{ i\pi}a)$ and $s=\cos (\frac{j\pi}b-\frac{ i\pi}a)$ and
$Q_c(s,t,\phi)=0$, for some
$1\leq i \leq \frac{a-1}2$ and $1\leq j \leq b-1$, from Proposition \ref{dp}.
\pn
$Q_c(s,t,\phi)$ is a symmetric polynomial of $\ZZ[\phi][t,s]$.
Let $\alpha_i = \frac{i\pi}a$, $\beta_j = \frac{j\pi}b$ and
$s=\cos(\alpha_i+\beta_j)$, $t=\cos(\alpha_i-\beta_j)$.
From $s+t = 2 \cos\alpha_i \cos\beta_j$ and
$st= \cos^2\alpha_i +\cos^2\beta_j -1$, we deduce that
$Q_c(s,t,\phi)$  belongs to $\ZZ[\phi,\cos\alpha_i] [\cos\beta_j]$.
$$R_i = \Prod_{j=1}^{b-1} Q_c(\cos(\alpha_i+\beta_j),\cos(\alpha_i-\beta_j),\phi)$$
belongs to $\ZZ[\phi,\cos\alpha_i]$ because the $\cos\beta_j$ are the roots of $V_b \in \ZZ[t]$.
From $Q_c(-s,-t,\phi)=Q_c(s,t,-\phi)$ we deduce that
$\Prod_{i=1}^{\frac{a-1}2} R_i(-\phi)R_i(\phi) = \Prod_{i=1}^{{a-1}}R_i(\phi) \in \ZZ[\phi]$.
We thus have $R^2_{a,b,c} \in \ZZ[\phi]$ and so it is for $R_{a,b,c}$.
\end{proof}
Let $s=\cos(\alpha+\beta)$, $t=\cos(\alpha-\beta)$.
Using Theorem \ref{fact} and Formula (\ref{fell}), we get
$$Q_c(s,t,\phi) = 2^{c-1} \Prod_{k=1}^{\pent{c}{2}} E_{\frac{2k\pi}n}(s,t).$$
Let us consider
$P_{\alpha,\beta,\gamma} = \Frac{1}{\sin^2\gamma} E_{2\gamma}(s +\phi,t+\phi).$
For $\gamma \not = \frac{\pi}2$, $P_{\alpha,\beta,\gamma}$ is
$$
\phi^2+2 \phi \cos\alpha \cos\beta+
\Frac{(\cos^2\alpha-\cos^2\gamma)(\cos^2\beta-\cos^2\gamma)}{\sin^2\gamma}$$
and
$$P_{\alpha,\beta,\frac\pi{2}}= \phi+\cos\alpha\cos\beta.$$
We therefore obtain
$$
Q_c(\cos (\alpha+\beta),\cos(\alpha-\beta),\phi)=
K \Prod_{k=1}^{\pent c2}
P_{\alpha,\beta,\frac{k\pi}{c}}(\phi)
$$
with $K = 2^{c-1}  \Prod_{k=1}^{c} 2\sin\frac{k\pi}c = c2^{c-1}$.
We get therefore
$$
R_{a,b,c}(\phi) = K^{\frac 12 (a-1)(b-1)}\Prod_{k=1}^{\pent c2} \Prod_{i=1}^{\frac{a-1}2}\Prod_{j=1}^{b-1} P_{\frac{i\pi}{a},\frac{j\pi}{b},\frac{k\pi}{c}}(\phi).
$$
We have written $R_{a,b,c}$ as the product of second or first-degree polynomials $P_{\alpha,\beta,\gamma}$ in
$\QQ(\cos\frac\pi{a},\cos\frac\pi{b},\cos\frac\pi{c})$.
\section{Computing the critical values}\label{ccv}
%
%
Our strategy consists in first computing separately the real roots of each $P_{\alpha,\beta,\gamma}$ and then combining these roots to get those of $R_{a,b,c}$.  A straightforward approach would be to use interval arithmetic to approximate the various trigonometric expressions, but this would fail when $R_{a,b,c}$ has multiple roots, unless we cannot ensure if
some discriminant or some resultant are null.
\subsection{Real roots of $P_{\alpha,\beta,\gamma}$}
Let $\alpha = \frac{i\pi}{a}$,
$\beta = \frac{j\pi}{b}$ and $\gamma =  \frac{k\pi}{c}$ with
$1 \leq i \leq \frac{a-1}2, \ 1 \leq j \leq b-1, \ 1 \leq k \leq \pent{c-1}2$.
\pn
If $\gamma=\frac\pi 2$, the unique root of $P_{\alpha,\beta,\frac{\pi}2}$ is $-\cos\alpha\cos\beta$.
If $\gamma \not = \frac{\pi}2$,
the discriminant of $P_{\alpha,\beta,\gamma}$ is
$4 \cos^2 \gamma \Bigl(1 -\Frac{\sin^2\alpha \sin^2 \beta }{\sin^2 \gamma}\Bigr)$.
It has the same sign as
\begin{equation}
\sin^2 \gamma -\sin^2\alpha \sin^2 \beta
\label{signdiscr}
\end{equation}
The knowledge of the sign of (\ref{signdiscr}) then gives explicit formulas for the
real roots of $P_{\alpha,\beta,\gamma}$.
\subsection{Multiplicity of 0}\label{multiple}
\begin{proposition}
The multiplicity of $\phi=0$ in $R_{a,b,c}$ is
$$
\frac{a-1}2((b,c)-1) + \pent{b}2((a,c)-1).
$$
\end{proposition}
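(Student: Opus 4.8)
The plan is to read the multiplicity of $\phi=0$ directly off the product expansion
$$R_{a,b,c}(\phi) = K^{\frac12(a-1)(b-1)}\Prod_{k=1}^{\pent c2}\Prod_{i=1}^{\frac{a-1}2}\Prod_{j=1}^{b-1} P_{\frac{i\pi}a,\frac{j\pi}b,\frac{k\pi}c}(\phi)$$
obtained just above. Since $K=c2^{c-1}\neq 0$, the sought multiplicity is exactly the sum, over all triples $(i,j,k)$ occurring in the product, of the order of vanishing at $\phi=0$ of the factor $P_{\alpha,\beta,\gamma}$, where $\alpha=\frac{i\pi}a$, $\beta=\frac{j\pi}b$, $\gamma=\frac{k\pi}c$. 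Thus the whole proof reduces to a gcd-counting argument, and the constant $K$ plays no role.

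First I would record the order of vanishing of a single factor. For $\gamma=\frac\pi2$ (the linear factors, present only when $c$ is even, at $k=\frac c2$), $P_{\alpha,\beta,\pi/2}=\phi+\cos\alpha\cos\beta$ vanishes at $0$ iff $\cos\alpha\cos\beta=0$; as $\alpha\in(0,\frac\pi2)$ this forces $\cos\beta=0$, i.e. $\beta=\frac\pi2$. For $\gamma\neq\frac\pi2$ we have $P_{\alpha,\beta,\gamma}=\phi^2+2\cos\alpha\cos\beta\,\phi+C$ with $C=\Frac{(\cos^2\alpha-\cos^2\gamma)(\cos^2\beta-\cos^2\gamma)}{\sin^2\gamma}$, so $\phi=0$ is a root of order $0$ if $C\neq0$, of order $1$ if $C=0$ and $\cos\beta\neq0$, and of order $2$ if $C=0$ and $\cos\beta=0$.

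Next I would turn the conditions $C=0$ into arithmetic. Because $\alpha,\gamma\in(0,\frac\pi2]$ and $\beta\in(0,\pi)$, we have $\cos^2\alpha=\cos^2\gamma\Leftrightarrow\alpha=\gamma\Leftrightarrow ic=ka$, and $\cos^2\beta=\cos^2\gamma\Leftrightarrow\beta=\gamma$ or $\beta=\pi-\gamma$. A crucial point is that these two equalities are mutually exclusive for a quadratic factor: $\cos^2\alpha=\cos^2\beta$ would force $a\mid i$, impossible since $1\le i\le\frac{a-1}2$. Hence $C$ vanishes iff exactly one of the two conditions holds, and I can count the two families separately. Writing $g=(a,c)$, which is odd because $a$ is odd, a short count shows that for each fixed $j$ there are exactly $\frac{g-1}2$ pairs $(i,k)$ with $\alpha=\gamma$; so the $\alpha=\gamma$ triples number $(b-1)\,\frac{(a,c)-1}2$. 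Symmetrically, with $h=(b,c)$, counting the pairs $(j,k)$ with $\beta=\gamma$ or $\beta=\pi-\gamma$ (keeping the cutoffs $k\le\pent c2$ and $\gamma\neq\frac\pi2$) gives $h-1$ pairs if $h$ is odd and $h-2$ if $h$ is even, and multiplying by the free index $i$ contributes $\frac{a-1}2$ times that number.

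Finally I would assemble the total and watch the parities. The order-two contributions arise only from $\alpha=\gamma$ triples that also satisfy $\beta=\frac\pi2$ (the branch $\cos^2\beta=\cos^2\gamma$ cannot have $\beta=\frac\pi2$, since that would force $\gamma=\frac\pi2$), adding an extra $\frac{(a,c)-1}2$ precisely when $b$ is even; the linear factors add $\frac{a-1}2$ precisely when $b$ and $c$ are both even. Summing the four pieces, the terms carrying the factor $(a,c)-1$ collapse to $\pent b2\,((a,c)-1)$ (whether $b$ is even or odd), and the terms carrying the factor $\frac{a-1}2$ collapse to $\frac{a-1}2\,((b,c)-1)$, using that $(b,c)$ is even exactly when $b$ and $c$ are both even. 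I expect the main obstacle to be exactly this last reconciliation: one must check that the extra double-root count and the linear-factor count combine to promote the raw quadratic counts $h-2$ and $b-1$ to the clean values $(b,c)-1$ and $\pent b2$. Everything else is routine counting of the rational coincidences $\frac ia=\frac kc$ and $\frac jb\in\{\frac kc,\,1-\frac kc\}$.
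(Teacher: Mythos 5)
Your proof is correct and takes essentially the same route as the paper: both read the multiplicity of $\phi=0$ directly off the factorization of $R_{a,b,c}$ into the factors $P_{\alpha,\beta,\gamma}$, translate the vanishing of a factor at $0$ into the arithmetic coincidences $ic=ka$, $jc=kb$ or $(b-j)c=kb$, and count these, with double roots occurring exactly when $\alpha=\gamma$ and $\beta=\frac{\pi}{2}$. Your write-up is in fact more explicit than the paper's on the edge cases (the exclusion of $k=\frac{c}{2}$ among the quadratic factors, the separate linear-factor contribution, the non-overlap of the two families, and the parity reconciliation giving $\pent{b}2$ and $(b,c)-1$), which the paper compresses into a terse final count.
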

\begin{proof}
We have to examine whenever $\phi=0$ is a root of $P_{\alpha, \beta, \gamma}$ where
$\alpha = \frac{i\pi}a$, $\beta=\frac{j\pi}b$ and $\gamma=\frac{k\pi}c$.
Here $a$ is odd so $\cos\alpha\not =0$. Thus, $\phi=0$ is a root of $P_{\alpha,\beta,\frac{\pi}{2}}$ if and only if
\begin{equation}
\cos\alpha\cos\beta \label{signPhi01}
\end{equation}
is null and when $\gamma\neq\frac{\pi}{2}$, $\phi=0$ is a root of $P_{\alpha,\beta,\gamma}$  if and only if the following expression is null
\begin{equation}
(\cos^2\alpha-\cos^2\gamma)(\cos^2\beta-\cos^2\gamma). \label{signPhi02}
\end{equation}
\begin{itemize}
\item If $\gamma=\beta=\frac\pi{2}$, $\phi=0$ is a root for $i=1,\ldots, \frac{a-1}2$.
\item If $\gamma \not = \frac{\pi}2$, $\phi=0$ is a root of $P_{\alpha,\beta,\gamma}$ if and only
if $\sin^2\gamma = \sin^2\alpha$ or $\sin^2\gamma = \sin^2\beta$, that is $ic=ka$ or $jc=kb$ or
$(b-j)c=kb$. The root $\phi=0$ is obtained for $i=\lambda \frac{a}{(a,c)}$, $k=\lambda \frac{c}{(a,c)}$,
$\lambda=1, \ldots, \frac{(a,c)-1}2$ and it is double when $\beta=\frac{\pi}2$.
It is also obtained for $j=\mu \frac{b}{(b,c)}$, $k=\mu\frac{c}{(a,c)}$, $\mu = 1, \ldots, (b,c)-1$. We obtain
$\pent b2 ((a,c)-1) + ((b,c)-1)(a-1)/2$.
\end{itemize}
We thus obtain the result.
\end{proof}
{\bf Remark.}
We find that $0$ is not a critical value if and only if $a$, $b$ and $c$ are pairwise coprime integers.
This result was first proved by Comstock (\cite{Com}, 1897), who found the number of crossing points of the
harmonic  curve parametrized by $x=T_a(t), y=T_b(t), z=T_c(t).$
\subsection{Non null multiple roots of $R_{a,b,c}$}
It may happen that $R_{a,b,c}$ has multiple root $\phi$. Several cases may occur.
\pn
$\blacktriangleright$
$P_{\alpha,\beta,\gamma}$ has a double root if and only if
$\Disc(P_{\alpha,\beta,\gamma})=0$, that is to say 
$\sin^2\gamma = \sin^2\alpha \sin^2\beta$. The double root is $\phi=-\cos\alpha\cos\beta$.
\pn
$\blacktriangleright$
$P_{\alpha,\beta,\gamma_1}$ and $P_{\alpha,\beta,\gamma_2}$ have a common root. In this case
$P_{\alpha,\beta,\gamma_1}=P_{\alpha,\beta,\gamma_2}$, that is to say
\begin{equation}
(\sin^2 \gamma_1 - \sin^2 \gamma_2)(\sin^2 \gamma_1 \sin^2 \gamma_2-\sin^2\alpha \sin^2 \beta)
\label{signresultant}
\end{equation}
is null.
\pn
$\blacktriangleright$
$P_{\alpha_1,\beta_1,\gamma_1}$ and $P_{\alpha_2,\beta_2,\gamma_2}$ have a common root.
\pn
The first two cases are related to the equation
\begin{equation}
\sin r_1 \pi \sin r_2 \pi = \sin r_3 \pi \sin r_4 \pi \label{sin4}
\end{equation}
where $r_i \in \QQ$. All the solutions of Equation (\ref{sin4})
are known (see \cite{My}). There is a one-parameter infinite family of solutions
corresponding to
$$
\sin \frac{\pi}6 \sin \theta = \sin\frac{\theta}2 \sin (\frac{\pi}2 - \frac{\theta}2),
$$
and a finite number of solutions listed in \cite{My}.
We deduce from a careful study of the Equation (\ref{sin4}):
\begin{proposition}\label{double1}
Let $\alpha = \frac{i\pi}a$, $\beta=\frac{j\pi}b$ and $\gamma=\frac{k\pi}c$, where $(a,b)=1$ and $a$ is odd.
$P_{\alpha,\beta,\gamma}$ has a double root iff $\beta = \frac{\pi}2$ and $\sin \gamma = \sin \alpha$.
In this case, the double root is $\phi=0$.
\end{proposition}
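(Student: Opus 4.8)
The plan is to reduce the statement to the discriminant criterion recorded just above and then to invoke the classification of the solutions of Equation (\ref{sin4}). Since $1\le k\le\pent{c-1}2$ forces $\gamma<\frac\pi2$, the polynomial $P_{\alpha,\beta,\gamma}$ is genuinely quadratic, so by the first item above it has a double root if and only if $\Disc(P_{\alpha,\beta,\gamma})=0$, i.e. $\sin^2\gamma=\sin^2\alpha\sin^2\beta$, and in that case the double root is $\phi=-\cos\alpha\cos\beta$. As $\alpha,\beta,\gamma\in(0,\pi)$ all have positive sine, this condition is equivalent to $\sin\gamma=\sin\alpha\sin\beta$, which I rewrite as $\sin\gamma\,\sin\frac\pi2=\sin\alpha\,\sin\beta$, an instance of Equation (\ref{sin4}) with $(r_1,r_2,r_3,r_4)=(\frac kc,\frac12,\frac ia,\frac jb)$.

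The \emph{if} direction is immediate: if $\beta=\frac\pi2$ and $\sin\gamma=\sin\alpha$ then $\sin^2\gamma=\sin^2\alpha=\sin^2\alpha\sin^2\beta$, so the discriminant vanishes and the double root is $\phi=-\cos\alpha\cos\frac\pi2=0$. For the converse I would appeal to the description of the solutions of (\ref{sin4}) from \cite{My}. First I dispose of the \emph{trivial} solutions, those for which the unordered pairs $\{\frac kc,\frac12\}$ and $\{\frac ia,\frac jb\}$ coincide modulo the reflection $r\mapsto 1-r$. Because $a$ is odd and $1\le i\le\frac{a-1}2$ we have $\alpha<\frac\pi2$, hence $\sin\alpha<1$; the factor $\sin\frac\pi2$ on the left can therefore only be matched by $\sin\beta$, which forces $\beta=\frac\pi2$ and then $\sin\gamma=\sin\alpha$. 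This is exactly the asserted family, and it yields $\phi=0$ as above.

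It remains to exclude the \emph{nontrivial} solutions of (\ref{sin4}) — the one-parameter family and the finite exceptional list of \cite{My} — under the constraint that one of the four arguments equals $\frac\pi2$. For the infinite family $\sin\frac\pi6\sin\theta=\sin\frac\theta2\sin(\frac\pi2-\frac\theta2)$ a short check shows that the only non-degenerate way to make one of its arguments equal to $\frac\pi2$ is $\theta=\frac\pi2$, which forces $\{\alpha,\beta\}=\{\frac\pi4,\frac\pi4\}$; but $\alpha=\frac\pi4$ would require an even denominator, contradicting $a$ odd. The main obstacle is the finite list: I would run through each exceptional quadruple of \cite{My}, retain only those containing a $\frac12$, and verify that the entry landing in the $\alpha$-slot always has even denominator (incompatible with $a$ odd) or else violates $(a,b)=1$ or the admissible index ranges. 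Since the tabulated exceptional angles have denominators drawn from a fixed small set, this is a finite if tedious verification; once it is complete the trivial solutions are the only survivors, and the proposition follows.
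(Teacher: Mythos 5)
Your proposal is correct and takes essentially the same route as the paper: the paper also reduces the double-root condition to the vanishing of the discriminant, i.e.\ $\sin^2\gamma=\sin^2\alpha\sin^2\beta$, reads this as an instance of Equation~(\ref{sin4}) with one angle equal to $\frac{\pi}{2}$, and deduces the proposition from Myerson's classification \cite{My} via exactly the case analysis (trivial solutions, the one-parameter family, the sporadic list) that you outline. The paper itself gives no more detail than ``a careful study of Equation~(\ref{sin4})'', so your write-up --- with the explicit handling of the trivial solutions and of the infinite family, and the reduction of the sporadic solutions to a finite check --- is, if anything, more complete than the paper's own argument.
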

and
\begin{proposition}\label{double2}
Let $\alpha = \frac{i\pi}a$, $\beta=\frac{j\pi}b$ and $\gamma_1=\frac{k_1\pi}c$,
$\gamma_2=\frac{k_2\pi}c$, where $(a,b)=1$ and $a$ is odd. Then
$P_{\alpha,\beta,\gamma_1}$ and $P_{\alpha,\beta,\gamma_2}$ have a common root $\phi$ iff there are equal and
\bn
\item $\sin \alpha=\sin \gamma_1$, $\sin \beta = \sin \gamma_2$.\\ In this case, the roots are $\phi=0$ and $\phi = -2\cos\alpha \cos\beta$.
\item $\sin\beta = \frac 12$, $\sin \gamma_1 = \sin \frac 12 \alpha$, $\sin \gamma_2 = \cos \frac 12 \alpha$.\\
In this case the common roots are $\phi = -\cos(\alpha \pm \frac{\pi}6)$.
\item $\sin \gamma_2 = \sin \gamma_1$.
\en
\end{proposition}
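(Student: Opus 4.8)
The plan is to exploit the very rigid shape of the quadratics $P_{\alpha,\beta,\gamma}$. Since the coefficient of $\phi$ in $P_{\alpha,\beta,\gamma}$ equals $2\cos\alpha\cos\beta$ and does \emph{not} depend on $\gamma$, the polynomials $P_{\alpha,\beta,\gamma_1}$ and $P_{\alpha,\beta,\gamma_2}$ are both monic in $\phi$ with the same linear coefficient. Subtracting them therefore leaves only the difference of their constant terms; a common root forces that constant to vanish, i.e.\ the two polynomials coincide. This is exactly the assertion that they share a root iff they are equal, and it reduces the whole problem to deciding when the constant terms agree. (On the range $1\le k\le\pent{c-1}2$ one has $\gamma\neq\frac\pi2$, so both are genuine degree-two polynomials and no degenerate first-degree case intervenes.)

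Next I would equate the two constant terms and clear the positive denominator $\sin^2\gamma_1\sin^2\gamma_2$. Writing $u=\cos^2\gamma_1$, $v=\cos^2\gamma_2$, $p=\cos^2\alpha$, $q=\cos^2\beta$, a direct expansion factors the numerator of the difference as
$$
(\sin^2\gamma_1-\sin^2\gamma_2)\bigl(\sin^2\gamma_1\sin^2\gamma_2-\sin^2\alpha\sin^2\beta\bigr),
$$
which is precisely expression (\ref{signresultant}). Hence $P_{\alpha,\beta,\gamma_1}=P_{\alpha,\beta,\gamma_2}$ if and only if $\sin^2\gamma_1=\sin^2\gamma_2$ (this is item (3)) or $\sin^2\gamma_1\sin^2\gamma_2=\sin^2\alpha\sin^2\beta$. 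As all four angles lie in $(0,\pi)$ their sines are positive, so the second alternative is equivalent to $\sin\gamma_1\sin\gamma_2=\sin\alpha\sin\beta$, an instance of Equation (\ref{sin4}) with $r_1=k_1/c$, $r_2=k_2/c$, $r_3=i/a$, $r_4=j/b$.

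Now I would invoke Myerson's classification (\cite{My}) of the rational solutions of (\ref{sin4}) and intersect it with our arithmetic constraints. The \emph{trivial} solutions, where $\{\sin\gamma_1,\sin\gamma_2\}=\{\sin\alpha,\sin\beta\}$ as multisets, give $\sin\gamma_1=\sin\alpha$ and $\sin\gamma_2=\sin\beta$; substituting $\cos^2\gamma_1=\cos^2\alpha$ makes the constant term vanish, so $\phi=0$ is a root and, the product of the roots being $0$, the other root is $-2\cos\alpha\cos\beta$ — this is item (1). For the one-parameter family $\sin\frac\pi6\sin\theta=\sin\frac\theta2\sin(\frac\pi2-\frac\theta2)$ the crucial point is that $a$ is odd, so $\alpha=i\pi/a$ can never equal $\frac\pi6$ or $\frac{5\pi}6$ and thus $\sin\alpha\neq\frac12$. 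This forces the ``$\frac\pi6$'' slot onto $\beta$ and the ``$\theta$'' slot onto $\alpha$, giving $\sin\beta=\frac12$, $\sin\gamma_1=\sin\frac12\alpha$, $\sin\gamma_2=\cos\frac12\alpha$; the common quadratic then has (for $\beta=\frac\pi6$) sum of roots $-2\cos\alpha\cos\beta=-\sqrt3\cos\alpha$ and product $\cos^2\alpha-\frac14$, so by Vieta the roots are $-\cos(\alpha\pm\frac\pi6)$, which is item (2).

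The main obstacle is the final bookkeeping in the previous step. First, one must exclude the opposite role assignment, in which the ``$\frac\pi6$'' and ``$\theta$'' slots fall on $\gamma_1,\gamma_2$: this would force $\{\alpha,\beta\}=\{\frac\theta2,\frac\pi2-\frac\theta2\}$, hence $\alpha+\beta=\frac\pi2$, i.e.\ $2ib+2ja=ab$; reducing modulo $a$ and using $\gcd(2b,a)=1$ (valid since $a$ is odd and $(a,b)=1$) yields $a\mid i$, impossible for $1\le i\le\frac{a-1}2$. Second, and this is where the real care is needed, one must run through the \emph{finite} list of sporadic solutions of (\ref{sin4}) recorded in \cite{My} and check that none is compatible with $a$ odd, $(a,b)=1$ and the prescribed denominators $a,b,c$ together with the admissible index ranges. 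The parity of $a$ and the coprimality $(a,b)=1$ are exactly the levers that kill these exceptional cases, just as they eliminated $\sin\alpha=\frac12$ and $\alpha+\beta=\frac\pi2$ above. Once every exceptional solution is discarded, the only surviving possibilities are (1), (2) and (3), which completes the proof.
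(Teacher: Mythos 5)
Your proposal is correct and follows essentially the same route as the paper: the paper likewise observes that a common root forces $P_{\alpha,\beta,\gamma_1}=P_{\alpha,\beta,\gamma_2}$ (both being monic with the same linear coefficient $2\cos\alpha\cos\beta$), reduces this to the vanishing of (\ref{signresultant}), and then classifies the resulting instances of Equation (\ref{sin4}) via Myerson's theorem --- indeed the paper gives even fewer details, merely saying the proposition is ``deduced from a careful study of the Equation (\ref{sin4})''. The only step you leave unexecuted, namely running through Myerson's finite list of sporadic solutions and checking each is incompatible with $a$ odd and $(a,b)=1$, is precisely the verification the paper also leaves implicit, so your argument (including the correct factorization of the constant-term difference, the Vieta computations for items (1) and (2), and the arithmetic exclusion of the swapped role assignment) matches the paper's proof in both substance and level of rigor.
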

$\blacktriangleright$
In case when $\alpha_1 \not = \alpha_2$ or $\beta_1 \not = \beta_2$, $P_{\alpha_1,\beta_1,\gamma_1}$ and $P_{\alpha_2,\beta_2,\gamma_2}$
have a common root if $\Res_{\phi} (P_{\alpha_1,\beta_1,\gamma_1},P_{\alpha_2,\beta_2,\gamma_2}) = 0$.
This resultant can be expanded and its sign is the one of:
\begin{equation}
\begin{array}{l}
\left (
(\cos^2\alpha_1-\cos^2\gamma_1)(\cos^2\beta_1-\cos^2\gamma_1)\sin^2\gamma_2 \right . - \\
\left . \quad\quad (\cos^2\alpha_2-\cos^2\gamma_2)(\cos^2\beta_2-\cos^2\gamma_2)\sin^2\gamma_1\right )^2\\
\quad -4(\cos\alpha_1\cos\beta_1 - \cos \alpha_2 \cos \beta_2)\sin^2\gamma_1\sin^2\gamma_2 \times \\
\quad \left ((\cos^2\alpha_1-\cos^2\gamma_1)(\cos^2\beta_1-\cos^2\gamma_1)\cos \alpha_2 \cos \beta_2 \sin^2\gamma_2\right . -\\
\quad\quad\left . (\cos^2\alpha_2-\cos^2\gamma_2)(\cos^2\beta_2-\cos^2\gamma_2)\cos\alpha_1\cos\beta_1 \sin^2\gamma_1\right ).
\end{array}\label{signresultant2}
\end{equation}
It would be interesting to get an arithmetic condition asserting that this resultant is null.
\subsection{Computing the diagrams}\label{cdiagrams}
Let $\phi \in \RR$. $\phi$ may be a rational number $r \in \QQ-\cZ_{a,b,c}$ or an algebraic number given
by a polynomial whom it is a root and an isolating interval.
The main step is the computation of the crossing nature at the double point $A_{\alpha,\beta}$ corresponding to parameters
$(t= \cos\alpha+\beta, s=\cos\alpha-\beta)$, where
$\alpha = \frac{i\pi}a$, $\beta=\frac{j\pi}b$.
There are two cases to consider.
\begin{enumerate}
\item We know the roots $\phi_1\leq \cdots \leq \phi_m$ of $Q_c(s,t,\phi)$.\\
If $\phi<\phi_1$ then $n=0$ otherwise let $n=\max\{k, \, \phi > \phi_k\}$. We have
$\sign {Q_c(s,t,\phi)} = (-1)^n$.
\item We do not know the roots of $Q_c(s,t,\phi)$. \\
We compute $Q_c(s,t,\phi)$ using the recurrence formula:
$$
\begin{array}{rcl}
Q_0 &=& 0, \, Q_1 = 1, \,
Q_2 = 2S+ 4 \phi, \\
Q_3 &=& -4\,T+12\,\phi\,S+4\,{S}^{2}+12\,{\phi}^{2}-3.\nonumber\\
Q_{n+4} &=& 2 \left(S + 2\,\phi \right)\left (Q_{n+3}+Q_{n+1} \right )\\
&&- 2 \left( 2\,{\phi}^{2}+ 2\,T+2\,\phi\,S + 1\right) Q_{n+2} - Q_n.\label{Q_n}
\end{array}
$$
where $S=s+t=2\cos\alpha\cos\beta$ and $T=st=\cos^2\alpha + \cos^2\beta -1$  (see \cite{KPR}).
We work formally in $\QQ[u,v]/\langle M,N\rangle$ where
$M, N$ are the minimal polynomials of $u=\cos\alpha$, $v=\cos\beta$.
\end{enumerate}
The sign of the crossing is
\begin{eqnarray*}
D(s,t,\phi) &=& Q_c(s,t,\phi) P_{b-a}(s,t,\phi)\\
&=& (-1)^{i+j} \sin\frac{ib\pi}a \sin\frac{ja\pi}b Q_c(s,t, \phi) \\
&=& (-1)^{i+j + \pent{ib}a + \pent{ja}b} Q_c(s,t, \phi).
\end{eqnarray*}
\section{The algorithm}\label{algo}
We want to compute all the real roots $\phi_1< \ldots <\phi_n$ of $R_{a,b,c}$ that factors in $\frac 12 (a-1)(b-1)\pent c2$ polynomials
$P_{\alpha_i, \beta_j, \gamma_k}$.
We precisely want non overlapping intervals $[a_m,b_m]$ for these roots in order to chose sample rational points
$r_0 < a_1$, $b_i<r_i<a_{i+1}$, $b_n<r_n$.

At some stages, one may need to compute the sign of $\Disc(P_{\alpha,\beta,\gamma})$ (expression (\ref{signdiscr}))
or $\Res(P_{\alpha_1,\beta_1,\gamma_1},P_{\alpha_1,\beta_1,\gamma_1})$ (expressions (\ref{signresultant}) and (\ref{signresultant2})) in order to decide whether two roots are distinct or not.
This information is required for two reasons.
We first want to be sure that we get all the roots and secondly, we will need to know all the roots of
$Q_c(\cos(\alpha_i+\beta_j),\cos(\alpha_i-\beta_j),\phi)$ with their multiplicities in order to determine the nature of
the crossing over the corresponding double point in the diagram (section \ref{cdiagrams}.

The signs of  (\ref{signPhi01}) and (\ref{signPhi02}), may be evaluated by simple arithmetic considerations
on $\alpha$, $\beta$, $\gamma$.
\pn
{\bf {{\em Isolate} and {\em Refine}}}.
A very first step is to get accurate isolating intervals  with rational bounds
for $\cos \alpha_i$, $\cos \beta_j$ and $\cos \gamma_k$ to perform interval arithmetic
for the real roots of $P_{\alpha_i, \beta_j, \gamma_k}$.

Such intervals can be computed by performing algorithms based on Descarte rule of signs (see for example \cite{RZ}) on the used Chebyshev polynomials $V_n$. Algorithms like in \cite{RZ} can easily solve such polynomials for very high degrees (several thousands) with a large accuracy.
The computation of the required isolating intervals can then be performed as a pre-processing for the global algorithm.

From now, we denote by {\tt Isolate}($P$,$acc$) the function that isolates the roots of a univariate polynomial $P$ with rational coefficients by means of intervals with rational bounds for a given accuracy $acc$ (maximal length of the intervals). This function provides non overlapping intervals that contains a unique real root of $P$ (and such that each real root of $P$ is contained in one of the intervals).

Note that if more accuracy is required for some intervals,
it is easy to refine them  from the isolating intervals provided by the function {\tt Isolate}:
just evaluating $P$ at some points, without running again the {\tt Isolate} function with a higher value for $acc$).
We name by {\tt Refine}($I$,$P$,$acc$) the function that decreases the length of the interval $I$ to get an accuracy  $\leq acc$, knowing that the interval isolates a real root of $P$.
\pn
{\bf {\em IsolateP}}.
Thanks to Proposition \ref{double1}, one can compute the roots of $P_{\alpha,\beta,\gamma}$ with an appropriate accuracy,
using multi-precision interval arithmetic for the evaluations. We will use the function
{\tt IsolateP}$(\alpha,\beta,\gamma,acc)$
that returns a (possibly empty) list of $(\alpha,\beta,\gamma,[u,v])$ corresponding to isolating intervals $[u,v]$ for the roots.
\pn
{\bf {\em SignTest}}.
When two isolating intervals $[u_1,v_1]$ and $[u_2,v_2]$ corresponding to  $(\alpha_1,\beta_1,\gamma_1)$ and
$(\alpha_2,\beta_2,\gamma_2)$ are such that
$[u_1,v_1] \bigcap [u_2,v_2] \not = \emptyset$,
we first use a filter (named {\tt SignTest} in the sequel) which consists in using multi-precision interval arithmetic for
the evaluation of $\Res_{\phi} (P_{\alpha_1,\beta_1,\gamma_1},P_{\alpha_2,\beta_2,\gamma_2})$ (expressions
\ref{signresultant}) and (\ref{signresultant2}).

Thanks to Proposition \ref{double2}, we know by arithmetic considerations when (\ref{signresultant}) is null. We know also
the corresponding common roots and we change $[u_i,v_i]$ to $[u_1,v_1] \bigcap [u_2,v_2]$.

Expression (\ref{signresultant2}) is \\
$P = \left (
(C^2_1-C^2_5)(C^2_3-C^2_5)(1-C^2_6) \right . - $\\
\hbox{} \hfill $\quad\left . (C^2_2-C^2_6)(C^2_4-C^2_6)(1-C^2_5)\right )^2$\quad \\
\hbox{}\hfill$  - 4 \, (C_1C_3 - C_2 C_4)(1-C^2_5)(1-C^2_6) \times $ \hfill\hbox{}\\
$\left ((C^2_1-C^2_5)(C^2_3-C^2_5)C_2 C_4 (1-C^2_6)\right . -$\\
\hbox{}\hfill $\left . (C^2_2-C^2_6)(C^2_4-C^2_6)C_1 C_3 (1-C^2_5)\right ),\quad$\\
where $C_1 = \cos\alpha_1$,
$C_2 = \cos\alpha_2$, $C_3 = \cos\beta_1$, $C_4 = \cos\beta_2$, $C_5 = \cos\gamma_1$, $C_6 = \cos\gamma_2$.

Given isolating intervals with rational bounds that contain the
values of the required $C_i, \, i=1,\ldots,6$, the function {\tt SignTest}$(\alpha_1,\alpha_2,\beta_1,\beta_2,\gamma_1,\gamma_2)$
straightforwardly evaluates $P$. 
If the resulting interval is $[0, 0]$ or do not contains $0$, one can decide the sign of the input, otherwise, the function
returns {\tt FAIL}.
\pn
{\bf{\em FormalNullTest}}. In case of failure of {\tt SignTest}, one has to decide if the input
is null or not, which is the goal of the function {\tt FormalNullTest}  we now describe.

Let us write $\alpha_1 =  \frac{i_1\pi}{a_1},\alpha_2= \frac{i_2 \pi}{a_2}, \beta_1 = \frac{j_1 \pi}{b_1}, \beta_2=\frac{j_2 \pi}{b_2}, \gamma_1 =\frac{k_1\pi}{c_1}, \gamma_2 = \frac{k_2 \pi}{c_2}$.
Let $m$ be the smallest common multiple of $a_1, a_2, b_1, b_2, c_1$ and $c_2$. According to the
definitions of $T_n$, we have $C_i = T_{n_i}(\cos \frac{\pi}{m})$.
Since $M_m$ is the minimal polynomial of $\cos\frac{\pi}{m}$,
the expression $P(C_i,\ldots,C_6)$ is null if and only if
$P (T_{n_1},\ldots,T_{n_6})= 0$ in $\QQ[t]/\langle M_m (t)\rangle$.
\pn
{\bf{\em DoubleTest}}.
Our function first performs the {\tt SignTest}. If it returns an interval with bounds of same sign, then the sign of the tested expression is the sign of the two bounds of the interval.
Otherwise, we run the {\tt FormalNullTest}. If this test returns $0$ then the expression is null.
Otherwise, we decrease the lengths of the intervals that represent the values of $\cos\frac{k\pi}{m}$
by calling the function {\tt Refine} until the {\tt SignTest} does not FAIL
(the fact that the sign of the expression to be tested is known not to be $0$ ensures that this process will end).
\pn
{\bf The global algorithm}. We proceed in three steps :

(0) We isolate the roots of some Chebyshev polynomials using the {\tt Isolate} black-box with an arbitrary accuracy.

(1) We compute separately the roots of the $P_{\alpha,\beta,\gamma}$ by using {\tt IsolateP}.

(2) We then consider the list of these roots and observe carefully the overlapping intervals.
For any pair of overlapping interval, we decide whether corresponding resultants are null or not using
{\tt DoubleTest}. If the corresponding roots are equal then we change their isolating intervals by taking their
intersection.
\pn
From these disjoint intervals with rational bounds, we straightforwardly get the roots with their multiplicities.
We thus deduce the sample points $r_0 , \ldots, r_n$ we need. Furthermore, for each
$\alpha_i = \frac{i\pi}a$, $\beta_j = \frac{j\pi}b$, we know the roots with their multiplicities of
$Q_c(t,s,\phi)$, where $t=\cos(\alpha_i + \beta_j)$ and $s=\cos(\alpha_i - \beta_j)$. This information is helpful
for knowing the crossing nature at the point $A_{\alpha_i,\beta_j}$ (section \ref{cdiagrams}).
\section{Experiments}\label{experiments}
In the appendix of \cite{KPR}, we gave parametrizations of every rational knot as $\cC(3,b,c,\phi)$ and
$\cC(4,b,c,\phi)$ where $(b,c)$ were minimal for the lexicographic order ($c\leq 300$).
For 6 knots we knew the minimal $b$ and that $c>300$.
With the method we developed here, we recover all the minimal parametrizations we gave in \cite{KPR} but also for the
6 missing knots. The following knots admit the parametrizations:
$$
\begin{array}{ll}
9_{5}=\cC(3,13, 326, 1/85),&
10_3 = \cC(4,13, 348, 1/138),\\
10_{30} = \cC(4,13, 306, 1/738),&
10_{33} = \cC(4,13,856,1/328),\\
10_{36}=\cC(3,14, 385, 1/146),&
10_{39}=\cC(3,14, 373, 1/182).
\end{array}
$$
For example, one deduces that there is no parametrization of $9_5$ as Chebyshev knots with $(a,b,c) <_{\mathrm{lex}} (3,13,326)$.
\pn
$R_{3,14,385}$ has degree 4992. It has 2883 real roots. All are simple roots except 0 that is of multiplicity
$6$.
\pn
$R_{4,13,856}$ has degree 15390 and 9246 real roots ($0$ has multiplicity 18). We get 2050 non trivial knots, 83 of them are distinct, and 63 have less than 10 crossings. The total running time ---
critical values with their multiplicities, sampling of 1442 values, computing knot invariant --- was 450"
({\sc Maple 13}, on Laptop, 3Gb of RAM, 3MHz).
\pn
Outside the intrinsic combinatorial aspects of the problem, the complexity of our algorithm essentially depends on the {\tt FormalNullTest}.
In the worst case $d = abc$ and $\deg M_d = \frac 12(a-1)(b-1)(c-1)$, when $a,b, c$ are prime integers,
the most difficult computation consists in deciding if the expression \ref{signresultant2} is null
or not which is equivalent to testing if a univariate polynomial of degree at most $4d$ is null modulo $M_d$ or not.

These computations can be speed up a lot since they can be performed modulo a prime integer: all the considered polynomials have a power of two as leading coefficient and we just need to test if one polynomial is null modulo another one.

In this challenging experiments, we never had to run the {\tt FormalNullTest}, the {\tt SignTest} being always sufficient, thanks to the filters given by propositions \ref{double1} and \ref{double2} and to a good (experimental) choice initial choice of accuracy when computing the prerequisites running the {\tt Isolate} algorithm.
\section{Conclusion}\label{conc}
The method we developed in this paper allows us to compute Chebyshev knot diagrams for high values of $a$, $b$ and $c$.
Our experience with small $a$ and $b$ shows that the difficult cases (multiple roots of $R_{a,b,c}$ we found) were predictable.
There are certainly some specific reasons connected with arithmetic properties and the structure of cyclic extensions.
\pn
The main difference with the algorithm described in \cite{KPR} and the computation of $R_{a,b,c}$ as a polynomial
of degree $\frac 12(a-1)(b-1)(c-1)$, is that it came as a resultant of a polynomial
of degree $(c-1)$ in $(X,\phi)$ and a polynomial of degree $\frac 12 (a-1)(b-1)$ in $X$ with coefficients in a unique field extension.
The example described in this section can be considered as the extremal case, in terms of degree, to be solved using methods
from the state of the art when running \cite{KPR} while it can be solved in few minutes with the method proposed in this article.
\pn
From the point of view of knot theory, it is proved in \cite{KP4}, that rational knots with $N$ crossings can be parametrized by polynomials of degrees $(3,b,c)$ where $b+c \leq 3N$ which is far better that the results we obtain here. But we challenged to give, as it was done with Lissajous knots in \cite{BDHZ}, an exhaustive and certified list of minimal parametrizations.
We consider that it might be one step in the computing of polynomial curves topology.


\begin{thebibliography}{99}
\itemsep -1pt plus 1pt minus 1pt
\def\ttt{\tt}
%
\bibitem{BDHZ}
A. Boocher, J. Daigle, J. Hoste, W. Zheng,
{\it Sampling Lissajous and Fourier knots}, Experiment. Math., Vol {\bf 18(4)},
(2009), 481-497.
%
\bibitem{BHJS}
M. G. V. Bogle, J. E. Hearst, V. F .R. Jones, L. Stoilov, {\it
Lissajous knots} Journal of Knot Theory and its Ramifications,
3(2): 121-140, 1994.
%
%
\bibitem{CLOS}
D.~Cox, J.~Little, D.~O'Shea, {\it Using Algebraic
Geometry}, Graduate Texts in Mathematics, Springer, 1998.
%
\bibitem{Com}
E. H. Comstock,
{\it The Real singularities of Harmonic Curves of three Frequencies},
Trans. of the Wisconsin Academy of Sciences, Vol {\bf XI}, (1897) 452-464.
%
%
\bibitem{DOS}
A. Durfee, D. O'Shea, {\it Polynomial knots}, 2006. {\ttt arXiv:math/0612803v1}
%
\bibitem{Fi}
G. Fischer, {\it Plane Algebraic Curves}, A.M.S. Student
Mathematical Library Vol 15, 2001.
%
\bibitem{Hs}
J. J. Hsiao,
{\it On factorization of Chebyshev's polynomials of the first kind},
Bull. Inst. Math. Acad. sinica 12 (1984), no. 1, 89--94.
%
%
\bibitem{HZ}
J. Hoste, L. Zirbel, {\it Lissajous knots and knots with Lissajous
projections,} Kobe Journal of mathematics, vol 24, n$^{\rm o}2$, 2007.
%
\bibitem{JP}
V. F. R. Jones, J. Przytycki, {\it Lissajous  knots and billiard
knots,} Banach Center Publications, 42:145-163, 1998.
%
\bibitem{KP3}
P. -V. Koseleff, D. Pecker, {\it Chebyshev knots}, accepted for publication in
Journal of Knot Theory and its Ramifications, 2008.
{\ttt arXiv:0812.1089}
%
\bibitem{KP4}
P. -V. Koseleff, D. Pecker, {\it Chebyshev diagrams for rational knots}, 2009.
{\ttt arXiv:0906.4083}
%
\bibitem{KPR}
P. -V. Koseleff, D. Pecker, F. Rouillier,
{\it The first rational Chebyshev knots}, 22p., 27 figures, 3 tables, Mega Conference Barcelona (2009),
submitted. {\tt arXiv:0911.0566}.
%
\bibitem{La2}
C. Lamm, {\it Cylinder knots and symmetric unions
(Zylinder-knoten und symmetrische Vereinigungen),} Ph.D. Thesis,
Bonner Mathematische Schriften 321, Bonn, 1999.
%
\bibitem{Li}
J. A. Lissajous, {\it Sur l'étude optique des mouvements vibratoires}, Annales de Chimie et de Physique,
1857 {\bf 3}, t. I.
%
\bibitem{Mu}
K. Murasugi, {\it Knot Theory and its Applications}, Boston,
Birkh{\"a}user, 341p., 1996.
%
%
\bibitem{My}
G. Myerson, {\it Rational products of sines of rational angles},  Aequationes Math. {\bf 45. 1}  (1993), 70--82.
%
\bibitem{Rou}
F.~Rouillier,
{\it Solving zero-dimensional systems through the rational univariate representation},
Journal of Applicable Algebra in Engineering,
Communication and Computing, 9(5):433-461, 1999.
%
%
\bibitem{RZ}
F.~Rouillier, P.~Zimmermann, {\it Efficient Isolation of
Polynomial Real Roots}, J. of Computational and Applied
Mathematics,162(1):33-50, 2003.
%
\bibitem{Va}
V. A. Vassiliev, {\it Cohomology of knot spaces}, Theory of
singularities and its Applications, Advances Soviet Maths Vol 1, (1990).
%
\bibitem{WZ}
W. Watkins, J.  Zeitlin,
{\it The minimal polynomial of $\cos(2\pi/n)$}
Amer. Math. Monthly 100 (1993), no. 5, 471--474.
%

\end{thebibliography}
%
%
\end{document}